\newtheorem{theorem}{Theorem}[section]
\newtheorem{proposition}[theorem]{Proposition}
\theoremstyle{definition}
\theoremstyle{remark}
\numberwithin{equation}{section}
\begin{document}
\title{Toroidal Lie superalgebras and free field representations} 
\author{Naihuan Jing}
\address{Department of Mathematics, North Carolina State University,
Raleigh, NC 27695, USA} \email{jing@math.ncsu.edu}
\author{Chongbin Xu*}
\address{School of Sciences, South China University of Technology, Guangzhou 510640
and School of Mathematics \& Information, Wenzhou University,
Zhejiang 325035, China} \email{xuchongbin1977@126.com}
\thanks{Jing gratefully acknowledges the support from
Simons Foundation and CNSF} \keywords{Toroidal algebras, orthosymplectic
superalgebra, free field realizations} \subjclass[2000]{Primary: 17B60, 17B67,
17B69; Secondary: 17A45, 81R10}

\begin{abstract} A loop-algebraic presentation is given for
toroidal Lie superalgebras of classical types. Based on the loop superalgebra presentation free field realizations
of toroidal Lie superalgebras are constructed for types $A(m,n)$, $B(m,n)$, $C(n)$ and $D(m,n)$.
\end{abstract}
\maketitle

\section{Introduction}
Lie superalgebras and Lie algebras are both important classes of
algebraic structures with ample applications in mathematics and
particularly mathematical physics. Since Kac's classification of
finite-diemsional simple Lie (super)algebra \cite{K1}, there have
been various works on their representations and realizations.

Based on fermioninc realizations \cite{F, KP}, A. Feingold and I. Frenkel \cite{FF} realized
 classical affine Lie algebras using ferminoic fields
 and bosonic fields respectively. Their constructions have been generalized to other algebras such
 as extended affine Lie algebras \cite{G},
 affine Lie superalgebras \cite{KW}, Tits-Kantor-K\"ocher algebras \cite{T}, Lie algebras with
  central extentions \cite{L}, two-parameter quantum affine algebras \cite{JZ} and others.

Toroidal Lie (super)algebras are generalizations of affine
(super)Lie algebras and enjoyed many favorite properties similar to
affine Lie (super)algebras. In the case of 2-toroidal Lie algebras,
the Moody-Rao-Yakonuma presentation shows that the special toroidal
Lie algebras have a similar algebraic structure like the affine Lie
algebras \cite{MRY}, in particular, the double affine
Lie algebras are one subclass. Using the MRY-presentation, the first named author
and collators realized uniformly \cite{JM, JMX} 2-toroidal Lie
algebras of all classical types using bosonic fields or ferminoic
fields with help of a ghost field, which also included the newly discoverd
 bosonic/fermionic
realizations for orthogonal/symplectic types. In the case of super toroidal Lie algebras
we recently constructed a loop-like toroidal Lie superalgebra of type
$B(0,n)$ using bosonic fields and a ghost field in \cite{JX}. That work
suggests that the loop-algebra presentation may be given similarly.

The purpose of this paper is to give a Moody-Rao-Yokonuma presentation
for the toroidal superlagebras of classical types, and then use the presentations
to construct representations of toroidal Lie superalgebras
of type $A,B,C,D$.
Similar like the special case of \cite{JX} we use mixed bosons and fermions
as well as a ghost field.

The structure of this paper is as follows. In section 2, we collect the
preliminaries needed. In section 3, we define a Lie
superalgebra $\mathfrak{T}(X)$ to each type $X$ and give a loop-algebra
presentation for all classical Lie toroidal superalgebras. In section
4, free field representation of $\mathfrak{T}(X)$ is constructed for
each case. In the appendix we list all the extended distinguished Cartan matrices.

\section{Preliminaries}
A Lie superalgebra
$\mathfrak{g}=\mathfrak{g}_{\overline{0}}\oplus\mathfrak{g}_{\overline{1}}$
is  a $\mathbb{Z}_{2}$-graded vector space equipped with a bilinear
map$[\cdot,\cdot]:\mathfrak{g}\times\mathfrak{g}\rightarrow\mathfrak{g}$
such that
\begin{eqnarray*}
  &&\quad1)~
  [\mathfrak{g}_{\alpha},\mathfrak{g}_{\beta}]\subseteq\mathfrak{g}
_{\alpha+\beta}\quad(\mathbb{Z}_{2}-\mbox{gradation}),\\
  &&\quad2) ~[a,b]=-(-1)^{p(a)p(b)}[b,a]\quad(\mbox{graded antisymmetry}),\\
  &&\quad3)~[a,[b,c]]=[[a,b],c]]+(-1)^{p(a)p(b)}[b,[a,c]]\quad(\mbox{graded Jacobi identity}).
  \end{eqnarray*}
 where $\alpha,\beta\in\mathbb{Z}_{2}$ and $a,b,c$ are homogenous elements.

 Let $M,N\in\mathbb{N}$ and $V=V_{\overline{0}}\oplus
V_{\overline{1}}$, where
$\mbox{dim}V_{\overline{0}}=M,\mbox{dim}V_{\overline{1}}=N$. Then
the associative algebra $\mbox{End}V$ is equipped with
$\mathbb{Z}_{2}$-grading
 $\mbox{End}V=\mbox{End}_{\overline{0}} V\oplus\mbox{End}_{\overline{1}}
 V$, where $\mbox{End}_{\alpha} V=\{a\in \mbox{End}V\mid a(V_{s})\subseteq
 V_{s+\alpha}\}$.
 For any two homogenous elements $a,b\in \mbox{End} V $, we define a
 superbracket
$$[a,b]=ab-(-1)^{p(a)p(b)}ba$$
and extended bilinearly, then $\mbox{End}V$ becomes a Lie
superalgebra called general linear superalgebra and denoted by
$\mathfrak{gl}(M|N)$. Let $str$ be the superspace on
$\mathfrak{gl}(M|N)$, then
$\mathfrak{sl}(M,N)=\{a\in\mathfrak{gl}(M|N)|str a=0\}$ is an ideal
of $\mathfrak{gl}(M|N)$ of codimensional 1 called special linear
superalgebras. Note that $\mathfrak{sl}(N,N)$ contains the
1-dimensonal ideal consisting of $\lambda I_{2N}$. We set
\begin{eqnarray*}
  A(m,n) &=& \mathfrak{sl}(m+1,n+1),\quad\mbox{for}~m\neq n,m,n\geqslant0 \\
 A(n,n) &=& \mathfrak{sl}(m+1,n+1)/<I_{2n+2}>\,\quad n>0
\end{eqnarray*}

\subsection{The orthosymplectic
superalgebras} Let $(\cdot|\cdot)$ be a non-degen-erate bilinear
form on $V$ such that $(V_{\overline{0}}|V_{\overline{1}})=0$, the
restriction of $(\cdot|\cdot)$ to $V_{\overline{1}}$ is symmetric
and to $V_{\overline{0}}$ is skewsymmetric, so that $N=2n$ is even.
For $\alpha=0,1$, let
$$
\mathfrak{osp}(M|N)_{\overline{\alpha}}
=\big\{a\in\mathfrak{gl}(M|N)_{\overline{\alpha}}\big|(a(x)|y)
+(-1)^{\alpha p(x)}(x|a(y))=0,x,y\in V \big\}
$$
and $\mathfrak{osp}(M|N)=\mathfrak{osp}(M|N)_{\overline{0}}\oplus
\mathfrak{osp}(M|N)_{\overline{1}}$,
 then
$\mathfrak{osp}(M|N)$ becomes a simple subsuperalgebra of
$\mathfrak{gl}(M|N)$ which is called the orthosymplectic
superalgebras. We denote by respectively
\begin{eqnarray*}
&&B(m,n)=\mathfrak{osp}(2m+1|2n),\quad m\geqslant0,n\geqslant1;\\
&&C(n)=\mathfrak{osp}(2|2n),\quad n\geqslant1;\\
&&D(m,n)=\mathfrak{osp}(2m|2n),\quad m\geqslant0,n\geqslant1
\end{eqnarray*}

Let $(\cdot|\cdot)$ be an even symmetric invariant bilinear form on
linear superalgebra $\mathfrak{g}$, then the associated affine
 superalgebra $\mathfrak{g}^{(1)}$ is
$$\mathfrak{g}^{(1)}=\mathfrak{g}\otimes\mathbb{C}[t,t^{-1}]\oplus \mathbb{C}K\oplus \mathbb{C}d$$
with the following commutation relations
\begin{eqnarray*}
&&[a(m),b(n)]=[a,b](m+n)+m\delta_{m,-n}(a|b)K,\quad[d,a(m)]=-ma(m)
\end{eqnarray*}
where $K$ is central and $a(m)=a\otimes t^{m}; a,b\in
\mathfrak{g};m,n\in \mathbb{Z}$.

 In the framework of Lie superalgebras a special theory
is developed for what one calls the Kac-Moody superalgebra which is
the corresponding theory to that of Kac-Moody algebra and we have the following well-known result.

\begin{proposition} \label{P:KMalg} The linear superalgebras and their
associated affine superalgeras are both Kac-Moody
superalgebras.
\end{proposition}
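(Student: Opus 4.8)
The plan is to reduce the statement to Kac's contragredient (Cartan-matrix) construction and verify that each algebra in question arises that way from a symmetrizable generalized Cartan matrix. First I would fix, for each finite type $X\in\{A(m,n),B(m,n),C(n),D(m,n)\}$, a Cartan subalgebra $\mathfrak{h}$ of $\mathfrak{g}$ and a distinguished base $\Pi=\{\alpha_1,\dots,\alpha_r\}$ of its root system, recording the associated Cartan matrix $A=(a_{ij})$ together with the parity set $\tau\subseteq\{1,\dots,r\}$ marking the odd simple roots; these are precisely the distinguished Cartan matrices underlying the tables in the appendix. I would then select Chevalley generators $e_i\in\mathfrak{g}_{\alpha_i}$, $f_i\in\mathfrak{g}_{-\alpha_i}$ and coroots $h_i=[e_i,f_i]\in\mathfrak{h}$, normalized by the invariant form $(\cdot|\cdot)$, with $p(e_i)=p(f_i)=1$ exactly when $i\in\tau$.

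Next I would check that these generators satisfy the defining relations of a Kac-Moody superalgebra: $[h_i,h_j]=0$, $[e_i,f_j]=\delta_{ij}h_i$, $[h_i,e_j]=a_{ij}e_j$, $[h_i,f_j]=-a_{ij}f_j$, together with the Serre-type relations. Since $A$ is symmetrizable and $\mathfrak{g}$ is simple, Kac's characterization identifies $\mathfrak{g}$ with the contragredient superalgebra $\mathfrak{g}(A,\tau)$: the chosen generators indeed generate $\mathfrak{g}$, and the ideal of relations is exactly the maximal one meeting $\mathfrak{h}$ trivially. This exhibits each finite classical Lie superalgebra as a Kac-Moody superalgebra of finite type.

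For the affine part I would pass to the loop realization $\mathfrak{g}^{(1)}=\mathfrak{g}\otimes\mathbb{C}[t,t^{-1}]\oplus\mathbb{C}K\oplus\mathbb{C}d$ and adjoin one affine node. Writing $\theta$ for the highest root and $\delta$ for the null root, I would set $\alpha_0=\delta-\theta$, $e_0=f_\theta\otimes t$, $f_0=e_\theta\otimes t^{-1}$ and $h_0=K-h_\theta$, then verify that the enlarged generators $\{e_i,f_i,h_i\}_{i=0}^{r}$ satisfy the Kac-Moody relations for the extended Cartan matrix $\widehat{A}$ and parity $\widehat{\tau}$ tabulated in the appendix. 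Symmetrizability and nondegeneracy of the extended invariant form then identify $\mathfrak{g}^{(1)}$ with $\mathfrak{g}(\widehat{A},\widehat{\tau})$, giving the affine Kac-Moody superalgebra; the central element $K$ and the derivation $d$ match the contragredient center and the scaling element.

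The main obstacle lies in the Serre relations. When a distinguished simple root is odd and isotropic, so that $a_{ii}=0$ with $i\in\tau$, the classical relation $(\operatorname{ad} e_i)^{1-a_{ij}}e_j=0$ no longer captures the ideal, and one must supply the correct higher-order super-Serre relations and show they form a \emph{complete} defining set, so that no further relations intervene. Verifying completeness case by case, along the lines of Kac's classification and Yamane's presentation of affine Lie superalgebras, is the delicate step; the remainder of the argument is a bookkeeping check of the generators and coroots against the tables in the appendix.
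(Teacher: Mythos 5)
The paper offers no proof of this proposition at all: it is quoted as a well-known fact, implicitly resting on Kac's classification \cite{K1} and van de Leur's theory of contragredient Lie superalgebras \cite{Van}, so your proposal has to be judged on its own merits. Your overall route is the standard one, and the finite-type half is essentially correct: the surjection from the auxiliary algebra $\tilde{\mathfrak{g}}(A,\tau)$ onto $\mathfrak{g}$ has kernel meeting $\mathfrak{h}$ trivially, hence contained in the maximal ideal $\mathfrak{r}$ with that property, and simplicity of $\mathfrak{g}$ forces equality, so $\mathfrak{g}\cong\mathfrak{g}(A,\tau)$. (One caveat that both you and the paper suppress: for $A(n,n)$ the distinguished Cartan matrix is degenerate and the contragredient algebra is $\mathfrak{sl}(n+1|n+1)$, of which $A(n,n)=\mathfrak{sl}(n+1|n+1)/\langle I_{2n+2}\rangle$ is only the quotient by its center.)

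There are, however, two genuine problems. First, what you call the main obstacle is not an obstacle for this statement. In the framework the paper uses (van de Leur), a Kac-Moody superalgebra $\mathfrak{g}(A,\tau)$ is \emph{by definition} the contragredient algebra, i.e.\ the quotient of $\tilde{\mathfrak{g}}(A,\tau)$ by the maximal ideal meeting the Cartan trivially; no Serre relations enter the definition, and the paper's Proposition 2.2 lists the super-Serre relations as \emph{consequences} valid in any such algebra. So completeness of a super-Serre presentation (Yamane's theorem) is never needed here; conversely, if you insist on defining Kac-Moody superalgebras by Serre presentations, then your first two paragraphs establish nothing and the entire burden falls on that deep completeness theorem, making your argument circular or incomplete. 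Second, the step that genuinely requires an argument is the one you pass over: the affine identification. Since $\mathfrak{g}^{(1)}$ is not simple, the simplicity argument from the finite case does not transfer; one must prove that the loop realization has no nonzero ideal intersecting $\hat{\mathfrak{h}}=\mathfrak{h}\oplus\mathbb{C}K\oplus\mathbb{C}d$ trivially. This follows from the root-space decomposition of any ideal under $\hat{\mathfrak{h}}$ together with the nondegenerate pairing of opposite root spaces: for $x\in\hat{\mathfrak{g}}_{\alpha}$ and $y\in\hat{\mathfrak{g}}_{-\alpha}$ with $(x|y)\neq 0$, the bracket $[x,y]$ is a nonzero element of $\hat{\mathfrak{h}}$ lying in the ideal. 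Invoking ``symmetrizability and nondegeneracy of the extended invariant form'' without this argument leaves the affine half of the proposition unproved.
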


The following propsotion plays crucial role in defining the
loop-like toroidal linear superalgebras and the equalities is
usually callled the Serre type relations of Kac-Moody
superalgebras (cf. \cite{Van}).

\begin{proposition} Let $\mathfrak{g}(A,\tau)$ be a  Kac-Moody
superalgebra with Chavalley generators $e_{i},f_{i}(1\leqslant
i\leqslant n)$, then the following relations hold for all $i,j$:

If $a_{ii}=a_{ij}=0$, then
$\mbox{ad}e_{i}(e_{j})=\mbox{ad}f_{i}(f_{j})=0$;

If $a_{ii}=0,a_{ij}\neq0$, then
$(\mbox{ad}e_{i})^{2}(e_{j})=(\mbox{ad}f_{i})^{2}(f_{j})=0$;

If $a_{ii}\neq0$, then for $i\neq j$,
$(\mbox{ad}e_{i})^{1-\frac{2a_{ij}}{a_{ii}}}(e_{j})=(\mbox{ad}f_{i})^{1-\frac{2a_{ij}}{a_{ii}}}(f_{j})=0$.
\end{proposition}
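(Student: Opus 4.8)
The plan is to derive the Serre-type relations directly from the defining structure of a Kac–Moody superalgebra $\mathfrak{g}(A,\tau)$, working inside the positive nilpotent part generated by the $e_i$ (the argument for the $f_i$ being word-for-word symmetric via the Chevalley involution). The central tool is the graded Jacobi identity of Proposition's ambient superalgebra together with the defining brackets among Chevalley generators, namely $[h_i,e_j]=a_{ij}e_j$ and $[e_i,f_j]=\delta_{ij}h_i$, where $h_i=[e_i,f_i]$. First I would recall that $e_i$ is an odd element precisely when $i$ lies in the distinguished set $\tau$ with $a_{ii}=0$, and even otherwise; this parity bookkeeping is what forces the three separate cases in the statement.

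For the first case $a_{ii}=a_{ij}=0$, the generator $e_i$ is odd and $[h_i,e_j]=a_{ij}e_j=0$. I would set $x=(\mathrm{ad}\,e_i)(e_j)=[e_i,e_j]$ and show $x$ is a highest-weight-type vector that must vanish: applying $\mathrm{ad}\,f_k$ and using the graded Jacobi identity, one checks $[f_k,x]=[[f_k,e_i],e_j]\pm[e_i,[f_k,e_j]]=\delta_{ki}[h_i,e_j]\pm\delta_{kj}[e_i,h_j]$, and both terms vanish because $[h_i,e_j]=a_{ij}e_j=0$ and $[e_i,h_j]=-a_{ji}e_i$ with the relevant coefficient controlled by $a_{ij}=0$ (by the symmetrized Cartan condition). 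Since $x$ is annihilated by all $f_k$ yet lies in positive degree, the standard Kac–Moody argument — that a nonzero vector killed by all lowering operators generates a proper graded ideal, contradicting simplicity/the defining relations — forces $x=0$.

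For the remaining two cases the strategy is identical in spirit but the exponent changes. When $a_{ii}=0$ but $a_{ij}\neq 0$, I would set $x=(\mathrm{ad}\,e_i)^2(e_j)$ and compute $[f_k,x]$ by repeated graded Jacobi, tracking the odd parity of $e_i$ through the sign $(-1)^{p(a)p(b)}$; the key numerical identity is that the two copies of the bracket with $h_i$ cancel precisely because $a_{ii}=0$ makes $[h_i,e_i]=0$, leaving $[f_k,x]=0$. When $a_{ii}\neq 0$, $e_i$ is even and one is in the ordinary Kac–Moody situation: setting $N=1-2a_{ij}/a_{ii}$ and $x=(\mathrm{ad}\,e_i)^{N}(e_j)$, an induction on the power using $[h_i,(\mathrm{ad}\,e_i)^k(e_j)]=(a_{ij}+k a_{ii})(\mathrm{ad}\,e_i)^k(e_j)$ and $[f_i,(\mathrm{ad}\,e_i)^k(e_j)]=k(a_{ij}+\tfrac{k-1}{2}a_{ii})(\mathrm{ad}\,e_i)^{k-1}(e_j)$ shows that the $f_i$-action coefficient vanishes exactly when $k=N$.

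In all three cases the conclusion then follows from the same lemma: a homogeneous element of positive principal degree that is annihilated by every $f_k$ must be zero in $\mathfrak{g}(A,\tau)$. The main obstacle I anticipate is bookkeeping the parity-dependent signs $(-1)^{p(e_i)p(e_j)}$ correctly through the iterated graded Jacobi identity, especially in the mixed-parity subcase $a_{ii}=0,\,a_{ij}\neq 0$ where $e_i$ is odd so that $[e_i,[e_i,e_j]]$ does not simply reduce the way it would for even elements; here one must verify that $(\mathrm{ad}\,e_i)^2$ is the genuinely nonzero second-order operator rather than collapsing, and that the $a_{ii}=0$ cancellation survives the extra sign. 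This sign-tracking, rather than any conceptual difficulty, is where the real care is needed.
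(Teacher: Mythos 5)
The paper offers no internal proof to compare against: this proposition is quoted from Van der Leur's thesis \cite{Van}. Your overall strategy is the standard one behind that reference — show each Serre element is annihilated by every $\mathrm{ad}\,f_{k}$, conclude that the ideal it generates stays in the positive part and so meets the Cartan subalgebra trivially, and invoke the definition of $\mathfrak{g}(A,\tau)$ as the quotient of the free contragredient superalgebra by the maximal such ideal. Your first case is fine (granting the axiom $a_{ij}=0\Leftrightarrow a_{ji}=0$, which you correctly flag), and your second case is nearly fine, except that annihilation by $f_{j}$ (not $f_{i}$) produces a term proportional to $a_{ji}[e_{i},e_{i}]$, so you must first establish $[e_{i},e_{i}]=0$ for isotropic odd $i$; this does follow from the same highest-weight argument, since $[f_{k},[e_{i},e_{i}]]=2\delta_{ki}a_{ii}e_{i}=0$ when $a_{ii}=0$, but it is a separate step you omit.

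The genuine gap is your parity dichotomy, and it breaks the third case. In a Kac--Moody superalgebra, $e_{i}$ is odd if and only if $i\in\tau$; this is independent of whether $a_{ii}$ vanishes. Non-isotropic odd simple roots (odd $i$ with $a_{ii}\neq0$) exist and occur in this very paper: for $B(0,n)$ and its affinization the odd simple root $\alpha_{n}$ has $a_{nn}=2$ (here $d_{n}=1/2$, $(\alpha_{n},\alpha_{n})=1$; see the appendix matrix for $B(0,n)^{(1)}$). For such $i$ your recursion $[f_{i},(\mathrm{ad}\,e_{i})^{k}(e_{j})]=k\bigl(a_{ij}+\tfrac{k-1}{2}a_{ii}\bigr)(\mathrm{ad}\,e_{i})^{k-1}(e_{j})$ is false: writing $[f_{i},(\mathrm{ad}\,e_{i})^{k}(e_{j})]=c_{k}(\mathrm{ad}\,e_{i})^{k-1}(e_{j})$, the graded Jacobi identity with $p(e_{i})=p(f_{i})=\overline{1}$ gives the alternating recursion $c_{k}=a_{ij}+(k-1)a_{ii}-c_{k-1}$, hence $c_{2m}=m\,a_{ii}$ and $c_{2m+1}=a_{ij}+m\,a_{ii}$. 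The coefficient at $N=1-2a_{ij}/a_{ii}$ therefore vanishes only when $N$ is odd, i.e.\ only when $2a_{ij}/a_{ii}$ is an even integer; this is exactly the extra condition imposed on super Cartan matrices at non-isotropic odd indices (realized by $a_{n,n-1}=-2$ in the appendix), and you never invoke it — for even $N$ one gets $c_{N}=\tfrac{N}{2}a_{ii}\neq0$ and the highest-weight argument collapses. So as written your proof establishes the third relation only for even $e_{i}$, and the odd non-isotropic case is precisely the one the paper needs when it applies this proposition to verify the Serre relations for $\mathfrak{T}(B(0,n))$, where $i=n$ is odd, $a_{nn}=2$, and $N=3$.
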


Among the simple root systems of linear superalgebra $\mathfrak{g}$,
there exists a simple root system which the numble of odd roots is
the smallest. Such a simple root system is called the distinguished
simple root system and the associated Cartan matrix is called
distinguished Cartan matrix. Furthermore from the distinguished
simple (co)root system, Cartan matrix and Chevalley generators of
$\mathfrak{g}$, one can obtain those
 of $\mathfrak{g}^{(1)}$ in an almost same way adopted in
 the Lie algebra setting. For
convenience, we list the extended distigusihed Cartan matrix of
$\mathfrak{g}$ in the appendix.

 Fix a Vector space $V=V_{\overline{0}}\oplus
V_{\overline{1}}$. A formal distribution $a(z)=\sum_{n\in
\mathbb{Z}}a(n)z^{-n-1}$, where $a(n)\in\mbox{End}V$, is called a
field if for any $v\in V$ one has $a(n)v=0$ for $n\gg0$. Given a
field $a(z)$, we always assume that all the coefficients have the
same parity denoted by $p(a)$ and let
$$a(z)_{-}=\sum_{n\geqslant 0}a(n)z^{-n-1},\quad a(z)_{+}=\sum_{n< 0}a(n)z^{-n-1}$$

For any two fields  $a(z),b(z)$, we define their normally ordered
product as follows:
$$:a(z)b(w):=a(z)_{+}b(w)+(-1)^{p(a)p(b)}b(w)a(z)_{-}$$
It follows from this definition that normally ordered product
satisfies the supercommutativity, i.e.
$:a(z)b(w):=(-1)^{p(a)p(b)}:b(w)a(z):$and $:a(z)b(w):$ is also a
field with the parity of $p(a)+p(b)$. The contraction of any two
fields $a(z),b(w)$ is defined to be:
$$\underbrace{a(z)b(w)}=a(z)b(w)-:a(z)b(w):$$
Furthermore, one can define the normally ordered product of more
than two fields inductively from right to left.

The following well-known Wick's theorem \cite{FLM} is extremely useful for
calculating the OPE of two normally ordered product of free
fields (cf. \cite{K2}).

\begin{theorem} Let $x^{1},x^{2},\cdots,x^{M}$ and
$y^{1},y^{2},\cdots,y^{N}$ be two collections of fields with
definite parity and the notation of normally ordering. Suppose these
fields satisfy the following properties hold:
\begin{eqnarray*}
  1) && [\underbrace{x^{i}y^{j}},z^{k}]=0,\mbox{for all}~ i,j,k ~\mbox{and}~z=x~or~y;\\
  2) &&[x^{i}_{\pm},y^{j}_{\pm}]=0,\mbox{for all}~ i,j.
\end{eqnarray*}
hereafter $[\cdot,\cdot]$ means the superbracket. then we have that
\begin{eqnarray*}
&& :x^{1}x^{2}\cdots x^{M}::y^{1}y^{2}\cdots y^{N}: \\=&&
\sum_{s=0}^{\min(M.N)}\sum_{i_{1}<\cdots<i_{s}\atop
j_{1}\neq\cdots\neq j_{s}}
\Big(\pm\underbrace{x^{i_{1}}y^{j_{1}}}\cdots
\underbrace{x^{i_{s}}y^{j_{s}}}:x^{1}x^{2}\cdots
x^{M}y^{1}y^{2}\cdots
y^{N}:_{(i_{1},\cdots,i_{s},j_{1},\cdots,j_{s})}\Big)
\end{eqnarray*}
where the subscript $(i_{1},\cdots,i_{s},j_{1},\cdots,j_{s})$ means
the fields $x^{i_{1}},\cdots,x^{i_{s}},y^{j_{1}},\cdots,y^{j_{s}}$
are removed and the sign $\pm$ is obtained by the rule: each
permutation of the adjacent odd fields changes the sign.
\end{theorem}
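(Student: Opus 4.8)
The plan is to reduce the statement to its combinatorial core and then run a double induction. First I would record two consequences of the hypotheses. Writing $a = a_+ + a_-$ and comparing with the definition of the normally ordered product, one checks directly that the contraction is the graded commutator of the annihilation part with the whole field,
$$\underbrace{a(z)b(w)} = a(z)b(w) - :a(z)b(w): = [\,a(z)_-, b(w)\,],$$
the bracket being the superbracket. Condition~2 gives $[x^i_-, y^j_-]=0$, so $\underbrace{x^i y^j} = [x^i_-, y^j_+]$, while Condition~1 says each such contraction is central, i.e. a scalar-valued distribution. Centrality is what makes the theorem clean: once a contraction is extracted it commutes with everything and may be pulled outside every normal ordering, so the bookkeeping of which fields are paired decouples from the ordering of the surviving fields.

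The base ingredient is the case $M=1$: for a single field $x$,
$$x\,:y^1\cdots y^N: \;=\; :x y^1\cdots y^N: \;+\; \sum_{j=1}^N (\pm)\,\underbrace{x y^j}\,:y^1\cdots \widehat{y^j}\cdots y^N:,$$
which I would prove by induction on $N$. Splitting $x = x_+ + x_-$, the creation part $x_+$ already sits on the left and simply completes the fully normally ordered term $:x y^1\cdots y^N:$; the annihilation part $x_-$ must be transported to the right through the normally ordered string. Here I would use that $\mathrm{ad}(x_-)$ is a graded derivation of the associative product and hence, because $[x_-,y^j_-]=0$ and $[x_-,y^j_+]=\underbrace{x y^j}$ is central, a graded derivation of the normally ordered product as well; applying it to $:y^1\cdots y^N:$ produces exactly the single-contraction terms, each contraction being a scalar that factors out while the remaining fields stay normally ordered. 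The signs are precisely those incurred by moving $x$ (when odd) past the odd fields $y^1,\dots,y^{j-1}$ lying to the left of $y^j$, which is the stated rule.

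With this in hand I would induct on $M$. Writing $:x^1\cdots x^M: = :x^1 X:$ with $X = :x^2\cdots x^M:$ and expanding $:x^1 X: = x^1_+ X + (-1)^{p(x^1)p(X)} X x^1_-$, I apply the inductive hypothesis (the theorem for $x^2,\dots,x^M$) to $X\,:y^1\cdots y^N:$. In the $x^1_+$ summand $x^1$ stays uncontracted and on the left; in the $x^1_-$ summand I transport $x^1_-$ rightward through the $y$-string by the mechanism of the $M=1$ case, so that $x^1$ either contracts with some $y^j$ (yielding the central scalar $\underbrace{x^1 y^j}$, which then commutes past all contractions already produced) or survives uncontracted and recombines with the $x^1_+$ term. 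Since $x^1$ carries the smallest $x$-index, combining its pairing options with the pairings of $\{x^2,\dots,x^M\}$ against $\{y^1,\dots,y^N\}$ supplied by induction yields precisely the sum over all partial matchings $i_1<\cdots<i_s$, $j_1,\dots,j_s$ in the statement. The main obstacle is purely the sign bookkeeping: one must verify that the signs accumulated from supercommuting odd fields past one another---across both the base lemma and the inductive step, and when a central contraction is slid out through intervening odd fields---assemble exactly into the prescribed rule that each interchange of adjacent odd fields flips the sign. The combinatorial skeleton of which terms occur is immediate once the $M=1$ case and centrality are in place; matching every sign is where the real care is needed.
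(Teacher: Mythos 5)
You should know at the outset that the paper contains \emph{no proof} of this statement: Wick's theorem is quoted as a well-known result, with pointers to [FLM] and [K2], so there is no internal argument to compare yours against. What you propose is essentially the standard proof found in the cited literature (e.g.\ Kac, \emph{Vertex algebras for beginners}, Theorem 3.3), and its skeleton is correct. The three ingredients all check out: the identity $\underbrace{ab}=[a_-,b]$ is immediate from the paper's definitions of $a_\pm$ and $:ab:$, and hypothesis 2) reduces it to $[a_-,b_+]$; hypotheses 1) and 2) together really do make $\mathrm{ad}(x_-)$ a superderivation of the normally ordered product, since in $[x_-,:ab:]=[x_-,a_+]\,b+(-1)^{p(x)p(a)}a_+[x_-,b]+(-1)^{p(a)p(b)}[x_-,b]\,a_-$ the contraction $[x_-,b]$ can be slid past $a_\pm$ by centrality, which regroups the right-hand side as $:[x_-,a]b:+(-1)^{p(x)p(a)}:a[x_-,b]:$; and the induction on $M$ via $:x^1\cdots x^M:\;=\;x^1_+\,:x^2\cdots x^M:\;+\;(-1)^{p(x^1)p(x^2\cdots x^M)}:x^2\cdots x^M:\,x^1_-$ assembles exactly the sum over partial matchings, because the extracted contractions commute out of the way. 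Two cautions are worth recording. First, hypothesis 1) says only that the contractions supercommute with all the fields $x^i,y^j$; it does not make them ``scalar-valued,'' so phrase the argument using centrality alone (which is in fact all you ever use). Second, when you recombine $x^1_+T$ with $(\pm)T'x^1_-$ into $:x^1(\text{survivors}):$, the sign you inherit is $(-1)^{p(x^1)(p(X)+p(Y))}$ while the sign you need is $(-1)^{p(x^1)p(\text{survivors})}$; these differ by the total parity of the contracted pairs $\underbrace{x^{i_k}y^{j_k}}$, and reconciling this with the theorem's stated rule (one sign per transposition of adjacent odd fields, contracted fields included) is precisely the bookkeeping you flagged but did not carry out. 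Neither point is a gap in the strategy; both are settled by the routine, if tedious, sign computation.
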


In the final preparation of formal calculcus, we recall the
definition of the formal delta-function:
$\delta(z-w)=\sum_{n\in\mathbb{Z}}z^{-n-1}w^{n}$
which is formally defined to be the series expansions
in two directions:
 $$\partial^{(j)}_{w}\delta(z-w)=i_{z,w}\frac{1}{(z-w)^{j+1}}-i_{w,z}\frac{1}{(z-w)^{j+1}}$$
 where $\partial^{(j)}_{w}=\partial^{j)}_{w}/j!$ and the symbol $i_{z,w}$(resp.
 $i_{w,z}$) means power series expansion in the domain
 $|z|>|w|$ (resp.  $|z|>|w|$). In additional, the  equality holds when both
 sides make sense :$ f(z,w)\delta(z-w)=f(z,z)\delta(z-w)$. Usually we will
 drop $i_{z, w}$ if it is clear from the context.

\section{Loop-like toroidal  Lie superalgebras $\mathfrak{T}(X)$}

Let $R=\mathbb C[s^{\pm1},t^{\pm1}]$ be the complex commutative ring
of Laurant polynomials in two variables $s, t$. Let $\mathfrak g$ be
a complex Lie superalgebra. The loop Lie superalgebra $L(\mathfrak
g):=\mathfrak g\otimes R$ is defined under the Lie
superbracket $[x\otimes a, y\otimes b]=[x, y]\otimes ab$. Let
$\Omega_R$ be the $R$-module of K\"ahler differentials of $R$
spanned by $da, a\in R$, and let $d\Omega_R$ be the space of exact
forms. The quotient $\Omega_R/d\Omega_R$ has a basis consisting of
$\overline{s^{m-1}t^{n}ds}$, $\overline{s^{n}t^{-1}dt}$,
$\overline{s^{-1}ds}$, where $m, n\in\mathbb Z$. Here $\overline{a}$
denotes the coset $a+d\Omega_R$.

Let $T(\mathfrak g)$ be the
{\it toroidal superalgebra} Lie superalgebra:
\begin{equation*}
T({\mathfrak g})={\mathfrak g}\otimes R\oplus \Omega_R/d\Omega_R
\end{equation*}
with the Lie superbrackets ($x,y\in \mathfrak{g},~a,b\in R$)
$$[x\otimes a, y\otimes b]=[x, y]\otimes ab+(x|y)\overline{(da)b},
\quad [T({\mathfrak g}), \Omega_R/d\Omega_R]=0$$
and the parities are specified by: $$p(x\otimes a)=p(x),\quad
p(\Omega_R/d\Omega_R)=\overline{0}$$

\begin{theorem}\cite{IK} The {\it toroidal Lie superalgebra} $T(\mathfrak{g})$ is the
universal central extension of the loop Lie superalgebra $\mathfrak
{g}\otimes R$.
\end{theorem}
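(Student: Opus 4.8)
The plan is to use the standard homological characterization of universal central extensions of a perfect Lie superalgebra: if $L$ is perfect then it admits a universal central extension $\widehat L$, and a given central extension $\pi\colon\widehat L\to L$ is universal exactly when $\widehat L$ is itself perfect and every central extension of $\widehat L$ splits (equivalently $H_2(\widehat L)=0$), while the kernel of the universal extension is canonically $H_2(L)$. Accordingly it suffices to (i) observe that $T(\mathfrak g)$ is a central extension of $L:=\mathfrak g\otimes R$, (ii) show that both $L$ and $T(\mathfrak g)$ are perfect, and (iii) verify the universal mapping property by identifying the relevant second cohomology of $L$ with the Kähler quotient $\Omega_R/d\Omega_R$.

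Step (i) is immediate: the relations $[T(\mathfrak g),\Omega_R/d\Omega_R]=0$ and $p(\Omega_R/d\Omega_R)=\overline 0$ say precisely that $\Omega_R/d\Omega_R$ is a central even ideal, and the quotient recovers $L$ with its loop bracket. For (ii), each basic classical $\mathfrak g$ is simple, so $[\mathfrak g,\mathfrak g]=\mathfrak g$; writing a basis vector as $z=[x,y]$ and using that $R$ is unital, the identity $[x\otimes a,\,y\otimes 1]=z\otimes a+(x|y)\,\overline{(da)}$ shows each $z\otimes a$ lies in $[L,L]$ modulo the center, whence $L=[L,L]$. To see that $T(\mathfrak g)$ is perfect one must also produce the central basis vectors $\overline{s^{m-1}t^{n}\,ds}$, $\overline{s^{n}t^{-1}\,dt}$, $\overline{s^{-1}ds}$ as brackets: choosing homogeneous $x,y$ with $(x|y)\neq0$ and suitable monomials $a,b$, the relation $[x\otimes a,y\otimes b]=[x,y]\otimes ab+(x|y)\,\overline{(da)b}$ realizes each of them once the $L$-component (itself a bracket by the previous remark) is subtracted.

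Step (iii) is the heart of the argument. Given any central extension $0\to\mathfrak z\to E\xrightarrow{\phi}L\to0$, choose an even linear section $\sigma$ and form the even graded-antisymmetric $2$-cocycle $f(u,v)=[\sigma u,\sigma v]_E-\sigma[u,v]$ with values in $\mathfrak z$. A lift $\psi\colon T(\mathfrak g)\to E$ of $\mathrm{id}_L$ is unique by perfectness, and its existence amounts to showing that $f$ is cohomologous to the pullback of a linear functional on $\Omega_R/d\Omega_R$ along $(x\otimes a,y\otimes b)\mapsto(x|y)\,\overline{(da)b}$. I would first use that $L$ is a direct sum of adjoint copies of $\mathfrak g$ under the action of $\mathfrak g\otimes 1$, together with Whitehead-type vanishing for finite-dimensional $\mathfrak g$-modules, to replace $f$ by a cohomologous $\mathfrak g$-invariant cocycle; then invoke the fact that a basic classical Lie superalgebra carries a one-dimensional space of invariant supersymmetric bilinear forms, which collapses the $\mathfrak g$-dependence and gives $f(x\otimes a,y\otimes b)=(x|y)\,\tau(a,b)$ for a bilinear $\tau\colon R\times R\to\mathfrak z$. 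The cocycle identity then yields the cyclic relation $\tau(ab,c)+\tau(bc,a)+\tau(ca,b)=0$, while the supersymmetry $(x|y)=(-1)^{p(x)p(y)}(y|x)$ forces $\tau(a,b)=-\tau(b,a)$ and hence $\tau(1,b)=0$; these are exactly the Leibniz relation and the vanishing of exact forms that define $\Omega_R/d\Omega_R$, so $\tau$ factors as $\overline{(da)b}\mapsto\tau(a,b)$ and determines the desired $\psi$.

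The main obstacle is precisely this cohomological computation: the reduction of an arbitrary $2$-cocycle to a $\mathfrak g$-invariant one, and the proof that the residual bilinear datum on $R$ is governed by the Kähler quotient and not by some larger space. In the super setting the delicate part is bookkeeping the parity signs so that both the invariance reduction and the uniqueness of the invariant form survive for the orthosymplectic and special linear types; once the functional equation for $\tau$ is pinned down, matching it against the explicit basis $\overline{s^{m-1}t^{n}\,ds}$, $\overline{s^{n}t^{-1}\,dt}$, $\overline{s^{-1}ds}$ of $\Omega_R/d\Omega_R$ is a routine verification.
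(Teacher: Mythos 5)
The paper does not actually prove this theorem: it is quoted from Iohara--Koga \cite{IK}, with only the remark that a universal central extension is assumed perfect and hence unique, as in the Lie algebra case. So your proposal is an attempt to reprove the cited result, and it contains a genuine gap at precisely the step you yourself call the heart of the argument. The reduction of an arbitrary $2$-cocycle to a $\mathfrak{g}$-invariant one is justified by ``Whitehead-type vanishing for finite-dimensional $\mathfrak{g}$-modules,'' but for basic classical Lie superalgebras this is false: finite-dimensional modules are in general not completely reducible, and $H^1$ and $H^2$ with finite-dimensional coefficients need not vanish. The classical substitute (the argument behind Kassel's theorem for simple Lie algebras) can be salvaged using the quadratic Casimir, which acts on the adjoint module by twice the dual Coxeter number; but the dual Coxeter number vanishes exactly for $A(n,n)$ and $D(n+1,n)$, so for those types --- both in the paper's scope --- no such invariance reduction is available, and \cite{IK} has to argue quite differently.

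The second, related, gap is your appeal to the one-dimensionality of the space of invariant \emph{supersymmetric} bilinear forms. An invariant cocycle yields, for each pair $(a,b)\in R\times R$, an invariant bilinear form $B_{a,b}$ on $\mathfrak{g}$ which need not be supersymmetric (graded antisymmetry of the cocycle only relates $B_{a,b}$ to $B_{b,a}$), so you need the space of \emph{all} invariant bilinear forms to be one-dimensional. This fails for $A(n,n)=\mathfrak{psl}(n+1|n+1)$: the $2$-cocycle $\lambda$ defining the central extension $\mathfrak{sl}(n+1|n+1)\to\mathfrak{psl}(n+1|n+1)$ is an invariant bilinear form not proportional to $(\cdot|\cdot)$. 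Concretely, $\mathfrak{sl}(n+1|n+1)\otimes R$ is then a perfect central extension of $\mathfrak{g}\otimes R$ that does not factor through $T(\mathfrak{g})$: any factorization, evaluated on $\mathfrak{g}\otimes 1$, would force $\lambda(x,y)=\varphi([x,y])$ for some linear $\varphi$, i.e.\ the extension $\mathfrak{sl}(n+1|n+1)$ would split, contradicting its perfectness. Hence your uniform argument would prove a statement that is actually false for type $A(n,n)$; a correct proof must treat the zero--dual-Coxeter-number types separately (and the theorem itself must be read with the appropriate caveat for $A(n,n)$), which is exactly the content and the difficulty of \cite{IK}.
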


Here a universal central extension is assumed to be perfect, thus there
is a unique universal central extension as in the Lie algebra cases
\cite{Ga, W}.
Next we would like to give a loop algebra presentation for the
toroidal Lie superalgebras of classical types as the Moody-Rao-Yokonuma
presentation for toroidal Lie algebras \cite{MRY}.

Let $\mathfrak g$ be the Lie superalgebra associated with the
  extended distinguished Cartan matrix $A=(a_{ij})$ of type $X^{(1)}$. Let
$Q=\mathbb{Z}\alpha_{0}\oplus\cdots\oplus \mathbb{Z}\alpha_{r}$ be
its root lattice, where $r=m+n+1; m+n; n+1$ respectively for
$X=A(m,n)$; $B(m,n),D(m,n)$; $X=C(n)$. The odd simple roots are:
$$\Pi\cap\Delta_{\overline{1}}
=\left\{
\begin{array}{ll}
\{\alpha_{0},\alpha_{n}\}& ~ \mbox{if}~X=A(m,n);\\
\{\alpha_{n}\}& ~ \mbox{if}~X=B(m,n),~D(m,n);\\
\{\alpha_{0},\alpha_{1}\}&~ \mbox{if}~X=C(n)
\end{array} \right.$$
The standard invariant form is given by
$(\alpha_{i},\alpha_{j})=d_{i}a_{ij}$, where
$$(d_{0},d_{1},\cdots,d_{r})
=\left\{
\begin{array}{lllll}
(1,\underbrace{1,\cdots,1}_{m+1},\underbrace{-1,\cdots,-1}_{n}),&\mbox{if}~X=A(m,n);\\
(2,\underbrace{1,\cdots,1}_{n-1},1/2),&\mbox{if}~X=B(0,n);\\
(2,\underbrace{1,\cdots,1}_{n},\underbrace{-1,\cdots,-1}_{m-1},-1/2),& \mbox{if}~ X=B(m,n), m\geqslant1;\\
(1,1,\underbrace{-1,\cdots,-1}_{n-1},-2),& \mbox{if}~ X=C(n);\\
(2,\underbrace{1,\cdots,1}_{n},\underbrace{-1,\cdots,-1}_{m}),& \mbox{if}~ X=D(m,n).\\
\end{array} \right.$$
Note that $d_i=\frac12(\alpha_i, \alpha_i)$ for non-isotropic roots.

\begin{theorem} \label{eq:presentation}
The  toroidal Lie superalgebra $T(\mathfrak g)$ 
is isomorphic to the Lie superalgebra  generated by
$$\{\mathcal{K},\alpha_{i}(k),x^{\pm}_{i}(k)|\, 0\leqslant i\leqslant
r,k\in\mathbb{Z}\}$$
 with parities given as : ($0\leqslant i\leqslant r,k\in\mathbb{Z}$)
$$p(\mathcal{K})=p(\alpha_{i}(k))=\overline{0},\quad p(x^{\pm}_{i}(k))=p(\alpha_{i})$$
subject to the following relations:
\begin{eqnarray*}
   &1)&[\mathcal{K},\alpha_{i}(k)]=[\mathcal{K},x^{\pm}_{i}(k)]=0; \\
   &2)& [\alpha_{i}(k),\alpha_{j}(l)]=k(\alpha_{i}|\alpha_{j})\delta_{m,-n}\mathcal{K}; \\
   &3)& [\alpha_{i}(k),x^{\pm}_{j}(l)]=\pm(\alpha_{i}|\alpha_{j})x^{\pm}_{j}(k+l); \\
   &4)& [x^{+}_{i}(k),x^{-}_{j}(l)]=0,\mbox{if}~i\neq j;\\
   &&[ x^{+}_{i}(k),x^{-}_{i}(l)]=-\frac{2}{(\alpha_{i}|\alpha_{i})}\{\alpha_{i}(k+l)
   +k\delta_{k,-l}\mathcal{K}\}, \mbox{if}~(\alpha_{i}|\alpha_{i})\neq0;\\
   &5)&[x^{\pm}_{i}(k),x^{\pm}_{i}(l)]=0;\\
   &&[x^{\pm}_{i}(k),x^{\pm}_{j}(l)]=0,\mbox{if}~a_{ii}=a_{ij}=0,i\neq j;\\
&&[x^{\pm}_{i}(k),[x^{\pm}_{i}(k),(x^{\pm}_{j}(l)]]=0,\mbox{if}~a_{ii}=0,a_{ij}\neq0;\\
   &&\underbrace{[x^{\pm}_{i}(k),\cdots,[}_{1-a_{ij}}x^{\pm}_{i}(k),x^{\pm}_{j}(l)]\cdots]=0,\mbox{if}~a_{ii}\neq0,i\neq
   j.
\end{eqnarray*}
\end{theorem}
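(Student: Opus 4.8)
The plan is to realize the presented algebra $\mathfrak{T}(X)$ and the toroidal algebra $T(\mathfrak g)$ as two central extensions of the same loop superalgebra $L(\mathfrak g)=\mathfrak g\otimes R$ and then to identify them through the universality in the quoted theorem of \cite{IK}. First I would build a Lie superalgebra homomorphism $\psi\colon\mathfrak T(X)\to T(\mathfrak g)$. Writing $e_i,f_i$ ($1\le i\le r$) for the Chevalley generators attached to the distinguished simple roots $\alpha_1,\dots,\alpha_r$, $t_{\alpha_i}\in\mathfrak h$ for the image of $\alpha_i$ under the form, and $e_\theta,f_\theta$ for root vectors of the highest root $\theta$ (so $\alpha_0=\delta-\theta$), I set
\begin{eqnarray*}
&&\psi\big(x_i^{+}(k)\big)=e_i\otimes s^{k},\qquad \psi\big(x_i^{-}(k)\big)=f_i\otimes s^{k},\qquad \psi\big(\alpha_i(k)\big)=t_{\alpha_i}\otimes s^{k}\quad(1\le i\le r),\\
&&\psi\big(x_0^{+}(k)\big)=f_\theta\otimes ts^{k},\qquad \psi\big(x_0^{-}(k)\big)=e_\theta\otimes t^{-1}s^{k},\qquad \psi(\mathcal K)=\overline{s^{-1}ds},
\end{eqnarray*}
together with $\psi(\alpha_0(k))=-t_\theta\otimes s^{k}$. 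Checking relations 1)--5) is a direct computation from the defining brackets of $T(\mathfrak g)$: relations 1)--3) reduce to the definition of the form and the $\mathrm{ad}$-action of $\mathfrak h$; the central terms in 2) and 4) come from $\overline{d(s^{k})s^{l}}=k\delta_{k,-l}\,\overline{s^{-1}ds}$; and the Serre-type relations 5) hold because they are exactly the identities of the earlier Proposition applied to $\mathfrak g$ (nodes $1\le i\le r$) together with the highest-root identities for the node $0$. The only freedom, the normalization of the root vectors and coroots, is fixed once and for all to make the scalars in 4) come out correctly.

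Next I would prove $\psi$ surjective. For $1\le i\le r$ and all $k$ the images $e_i\otimes s^{k},\ f_i\otimes s^{k}$ generate, by simplicity of $\mathfrak g$, the subalgebra $\mathfrak g\otimes\mathbb C[s^{\pm1}]$; bracketing these with $\psi(x_0^{\pm}(k))$ produces the $t^{\pm1}$-layers, and iterating yields all of $L(\mathfrak g)=\mathfrak g\otimes R$. Since $T(\mathfrak g)$ is the universal central extension of $L(\mathfrak g)$ it is perfect, so every element of the center $\Omega_R/d\Omega_R$ is a sum of brackets of elements already in the subalgebra $\psi(\mathfrak T(X))$; hence $\psi$ is onto.

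For injectivity I would argue through universality. One first checks $\mathfrak T(X)$ is perfect: relations 3)--4) express every $x_i^{\pm}(k)$ and every $\alpha_i(k)$ at a non-isotropic node as a bracket, an isotropic node being reached by bracketing with a neighbouring $\alpha_j(0)$ for which $(\alpha_i|\alpha_j)\ne0$, while $\mathcal K$ occurs in 2). Composing $\psi$ with the universal quotient $q\colon T(\mathfrak g)\to L(\mathfrak g)$ gives a surjection $\pi=q\psi\colon\mathfrak T(X)\to L(\mathfrak g)$, and the crucial claim (below) is that $\ker\pi$ is central, so that $\mathfrak T(X)$ is itself a central extension of $L(\mathfrak g)$. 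Granting this, \cite{IK} furnishes a unique homomorphism $\phi\colon T(\mathfrak g)\to\mathfrak T(X)$ over $L(\mathfrak g)$; then $\psi\phi$ is an endomorphism of the universal object over the identity of $L(\mathfrak g)$, whence $\psi\phi=\mathrm{id}$ by the uniqueness in the universal property. Consequently $\phi\psi$ is idempotent with image the subalgebra $\phi(T(\mathfrak g))$ and kernel the central ideal $\ker\psi$; since $\mathfrak T(X)$ is perfect, $\mathfrak T(X)=[\mathfrak T(X),\mathfrak T(X)]\subseteq\phi(T(\mathfrak g))$, forcing $\ker\psi=0$. Thus $\psi$ is an isomorphism.

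The main obstacle is the claim that $\ker\pi$ is central, i.e.\ that $\mathfrak T(X)$ collapses to exactly $L(\mathfrak g)$ once the central charges are killed and that no further relations are hidden. I would use the $\big(\bigoplus_{i}\mathbb Z\alpha_i\big)\times\mathbb Z$-grading carried by $\mathfrak T(X)$, where the first factor is the affine-root degree (encoding both the $\mathfrak g$-weight and the $t$-degree through $\delta$) and the second is the $s$-loop degree $k$; all relations are homogeneous, so $\ker\pi$ is a graded ideal. In a homogeneous piece indexed by a real affine root the target $L(\mathfrak g)$ is one-dimensional, so the decisive point is the dimension bound $\dim\mathfrak T(X)_{(\beta,k)}\le1$ there, which together with surjectivity makes $\pi$ an isomorphism on all real-root degrees and confines $\ker\pi$ to the imaginary (Cartan/loop) degrees $\beta\in\mathbb Z\delta$. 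On those degrees one checks directly from relations 2)--3) that if $z\in\ker\pi$ then $[z,x_j^{\pm}(l)]$ and $[z,\alpha_j(l)]$ vanish, because their coefficients are pairings of the Cartan part of $z$ with roots and that Cartan part is $\pi(z)=0$; hence $z$ is central. The genuinely new difficulty relative to the Lie-algebra case of \cite{MRY} is the dimension bound at the isotropic odd nodes $a_{ii}=0$ (the node $\alpha_n$, and for type $A$ also the affine node $\alpha_0$): there relation 4) supplies no Cartan bracket, so one must control the real-root spaces using the two extra Serre relations of the earlier Proposition (the $a_{ii}=a_{ij}=0$ and $a_{ii}=0,\,a_{ij}\ne0$ cases) to keep the grading argument valid.
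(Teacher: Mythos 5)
Your overall architecture is the same as the paper's: your $\psi$ is the paper's Proposition \ref{homo}, your reduction to ``$\ker(q\psi)$ is central and confined to the $\mathbb Z\delta$-degrees'' is exactly the paper's proof of the theorem, and your universality-plus-perfectness splitting argument is what the paper compresses into ``similar arguments as in \cite{MRY}''. But there is a genuine error in your first step: the map $\psi$ you write down is not a homomorphism, and the defect is not one of normalization. You set $\psi(\alpha_0(k))=-t_\theta\otimes s^k$ with no central component. Then relation 4) fails at $i=0$ for types $B(m,n)$ and $D(m,n)$, where $(\alpha_0|\alpha_0)\neq0$: in $T(\mathfrak g)$ one has
\begin{align*}
[\psi(x_0^+(k)),\psi(x_0^-(l))]&=[f_\theta\otimes ts^k,\ e_\theta\otimes t^{-1}s^l]
=[f_\theta,e_\theta]\otimes s^{k+l}+(f_\theta|e_\theta)\,\overline{d(ts^k)\,t^{-1}s^l}\\
&=[f_\theta,e_\theta]\otimes s^{k+l}+(f_\theta|e_\theta)\bigl(\overline{s^{k+l}t^{-1}dt}+k\delta_{k,-l}\,\overline{s^{-1}ds}\bigr),
\end{align*}
and $\overline{s^{k+l}t^{-1}dt}$ is one of the basis elements of $\Omega_R/d\Omega_R$, linearly independent of $t_\theta\otimes s^{k+l}$ and of $\overline{s^{-1}ds}$, whereas your right-hand side $-\tfrac{2}{(\alpha_0|\alpha_0)}\{-t_\theta\otimes s^{k+l}+k\delta_{k,-l}\mathcal K\}$ contains no multiple of it. Rescaling $e_\theta$, $f_\theta$, $t_\theta$ changes only scalars and cannot create a missing basis vector; choosing $(e_\theta|f_\theta)=0$ is no escape either, since then the left side vanishes while the right side does not. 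The same failure occurs in types $A(m,n)$ and $C(n)$ the moment one includes the isotropic-node relation $[x_0^+(k),x_0^-(l)]=-\{\alpha_0(k+l)+k\delta_{k,-l}\mathcal K\}$, which the generating-series form of the relations (and the intended statement) requires. This is precisely why the paper's Proposition \ref{homo} sends $\alpha_i(k)\mapsto d_i(h_i\otimes s^k+\delta_{i0}\overline{s^kt^{-1}dt})$: the zeroth Cartan series must absorb the second cocycle direction $\overline{s^kt^{-1}dt}$, and this is also what places those central elements in the image, making surjectivity onto all of $\Omega_R/d\Omega_R$ transparent.

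Once $\psi(\alpha_0(k))$ is corrected in this way, the remainder of your argument (surjectivity via perfectness of $T(\mathfrak g)$, the splitting $\psi\phi=\mathrm{id}$ from the universal property of \cite{IK}, and perfectness of $\mathfrak T(X)$ forcing $\ker\psi=0$) is sound, and it usefully spells out details the paper leaves to \cite{MRY}. Be aware, however, that the step you yourself flag as the main obstacle---the one-dimensionality of the real-root spaces of $\mathfrak T(X)$, needed to confine $\ker(q\psi)$ to the imaginary degrees, especially at the isotropic odd nodes---remains a sketch in your write-up, just as it does in the paper, which dismisses it with ``clearly central by the commutation relations''; on that point you are no less complete than the paper, but no more.
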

Let $\mathfrak{T}(X)$ be the algebra generated by
$\{\mathcal{K},\alpha_{i}(k),x^{\pm}_{i}(k)|\, 0\leqslant i\leqslant
r,k\in\mathbb{Z}\}$ defined in Theorem \ref{eq:presentation}.
The algebra $\mathfrak{T}(X)$
is a $Q\times \mathbb Z$-graded Lie
superalgebra under the grading: $\mbox{deg} \mathcal{K}=(0, 0),
\mbox{deg} \alpha_i(k)=(0, k), \mbox{deg} x^{\pm}_i(k)=(\pm\alpha_i,
k)$. We denote the subspace of degree $(\alpha, k)$ by $\mathfrak
T_k^{\alpha}$, then $\displaystyle \mathfrak T(X)=\oplus_{(\alpha,
k)\in Q\times \mathbb Z}\mathfrak T_k^{\alpha}(X)$. We remark that
the center of this algebra is contained in the subalgebra generated
by $\mathfrak T_k^{n\delta}(X)$, $k, n\in \mathbb Z$.

\begin{proposition} \label{homo} The following map defines a surjective
homomorphism from loop-like toroidal superalgebra $\mathfrak T(X)$
to the algebra $T(X)$:
\begin{align*}
\mathcal{K}&\mapsto \overline{s^{-1}ds}\\
\alpha_i(k)&\mapsto d_i(h_i\otimes s^k+\delta_{i0}\overline{s^kt^{-1}dt}), \qquad i=0, \cdots, n\\
x^+_i(k)&\mapsto e_i\otimes s^k, \qquad i=1, \cdots, n\\
x^-_i(k)&\mapsto -f_i\otimes s^k, \qquad i=1, \cdots, n\\
x^+_0(k)&\mapsto e_0\otimes s^kt^{-1}, \\
x^-_0(k)&\mapsto -f_0\otimes s^kt
\end{align*}
\end{proposition}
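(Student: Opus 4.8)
The plan is to verify directly that the proposed assignment respects all the defining relations of $\mathfrak{T}(X)$ listed in Theorem \ref{eq:presentation}, and then to check surjectivity. Since $\mathfrak{T}(X)$ is given by generators and relations, to produce a homomorphism it suffices to show that the images under the map satisfy relations $1)$ through $5)$ inside $T(\mathfrak g)$, where the bracket is the toroidal bracket $[x\otimes a, y\otimes b]=[x,y]\otimes ab+(x|y)\overline{(da)b}$ together with $[T(\mathfrak g),\Omega_R/d\Omega_R]=0$. First I would record the images explicitly and note the bookkeeping convention: the generator $\mathcal K$ goes to the central element $\overline{s^{-1}ds}$, the Cartan generators $\alpha_i(k)$ go to $d_i(h_i\otimes s^k+\delta_{i0}\overline{s^kt^{-1}dt})$, and the $i=0$ generators carry an extra power of $t$ built in (namely $e_0\otimes s^kt^{-1}$ and $-f_0\otimes s^kt$). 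This last feature is the whole point of the loop presentation: the affine node $\alpha_0$ is realized using the second torus variable $t$, so that a genuinely $2$-toroidal object is produced from a presentation that looks like an affinization in $s$ alone.

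The core of the argument is the relation-by-relation check. Relation $1)$ is immediate since $\overline{s^{-1}ds}$ is central in $T(\mathfrak g)$. For the Cartan-Cartan relation $2)$, I would compute $[h_i\otimes s^k, h_j\otimes s^l]=(h_i|h_j)\overline{(ds^k)s^l}=k(h_i|h_j)\overline{s^{k+l-1}ds}$, and observe that for $k+l\neq 0$ this form is exact (hence zero in $\Omega_R/d\Omega_R$), while for $k=-l$ it equals $k(h_i|h_j)\overline{s^{-1}ds}$; multiplying by $d_id_j$ and matching $(\alpha_i|\alpha_j)=d_i a_{ij}$ together with the normalization $(h_i|h_j)$ reproduces the right-hand side $k(\alpha_i|\alpha_j)\delta_{k,-l}\mathcal K$. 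One must also check that the extra $\overline{s^kt^{-1}dt}$ terms appearing when $i$ or $j$ equals $0$ contribute nothing to the bracket since $\Omega_R/d\Omega_R$ is central. Relation $3)$ follows from $[h_i\otimes s^k, e_j\otimes s^l]=\langle\alpha_j,h_i\rangle e_j\otimes s^{k+l}$ plus the exact-form term, after matching scalars; here the $i=0$ and $j=0$ cases require carefully tracking the $t$-powers, e.g. $[d_0(h_0\otimes s^k+\overline{s^kt^{-1}dt}), e_0\otimes s^lt^{-1}]$, and confirming the delicate point that the image of $\alpha_0(k)$ brackets correctly with $x^{\pm}_0(l)$ precisely because $\overline{s^kt^{-1}dt}$ is central and $h_0\otimes s^k$ supplies the coroot action. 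Relation $4)$ splits into the $i\neq j$ case (where $[e_i,f_j]=0$ in $\mathfrak g$) and the diagonal case, where $[e_i\otimes s^k,f_i\otimes s^l]=h_i\otimes s^{k+l}+(e_i|f_i)\overline{(ds^k)s^l}$ must be rewritten via $-\tfrac{2}{(\alpha_i|\alpha_i)}=-\tfrac{1}{d_i}$ so as to produce $-\tfrac{2}{(\alpha_i|\alpha_i)}\{\alpha_i(k+l)+k\delta_{k,-l}\mathcal K\}$; the $i=0$ subcase again requires the $t$-powers in $x^\pm_0$ to cancel so that the output lands on $\alpha_0(k+l)$ with its built-in $t$-differential term. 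The Serre-type relations $5)$ transfer immediately because the corresponding relations $\mathrm{ad}\,e_i(e_j)=0$, $(\mathrm{ad}\,e_i)^2(e_j)=0$, and $(\mathrm{ad}\,e_i)^{1-a_{ij}}(e_j)=0$ hold in $\mathfrak g$ by Proposition 2.4 and survive tensoring with $R$ since the central term $\Omega_R/d\Omega_R$ is killed by every iterated bracket of weight vectors of nonzero weight.

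Finally, for surjectivity I would argue that the image subalgebra contains a generating set for $T(\mathfrak g)$. The images of $x^\pm_i(k)$ for $1\le i\le r$ and all $k$ already produce $e_i\otimes s^k$ and $f_i\otimes s^k$; brackets among these together with the $\alpha_i(k)$ images generate $\mathfrak g\otimes\mathbb C[s^{\pm1}]$, since the finite-dimensional $\mathfrak g$ is generated by its Chevalley generators $e_i,f_i$. Using $x_0^\pm(k)$, which introduce the $t^{\mp1}$ factors, and bracketing repeatedly, I would show that one reaches $\mathfrak g\otimes s^k t^{\pm1}$ and then, inductively, all $\mathfrak g\otimes s^k t^m$; this is exactly the step mirroring the classical MRY argument that the affine node in the extra variable generates the full double loop algebra. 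Once all of $\mathfrak g\otimes R$ is in the image, the central part $\Omega_R/d\Omega_R$ is reached because the cocycle terms $\overline{(da)b}$ appearing in brackets span $\Omega_R/d\Omega_R$, together with the explicit central images $\mathcal K\mapsto\overline{s^{-1}ds}$ and the $\overline{s^kt^{-1}dt}$ pieces inside $\alpha_0(k)$. The main obstacle I anticipate is the careful sign and scalar bookkeeping in relation $4)$ and in the $i=0$ instances of relations $2)$ and $3)$: one must simultaneously track the superalgebra signs $(-1)^{p(a)p(b)}$, the normalization $-\tfrac{2}{(\alpha_i|\alpha_i)}$, and the precise way the $t$-powers attached to $x_0^\pm$ conspire with the central differential $\overline{s^kt^{-1}dt}$ so that the affine node's relations close correctly; verifying that these all reproduce the abstract relations of $\mathfrak{T}(X)$ rather than some twisted variant is where the real content lies.
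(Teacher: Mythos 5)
Your proposal is correct and follows essentially the same route as the paper: check that the images satisfy relations 1)--4) by direct computation with the toroidal cocycle in $T(\mathfrak g)$, and deduce the Serre-type relations 5) from the Serre relations of Kac--Moody superalgebras (the paper's Propositions 2.1--2.2; your citation ``Proposition 2.4'' is a misnumbering but clearly refers to these). The paper's own proof is far terser---it declares 1)--4) straightforward and leaves surjectivity implicit---so your explicit scalar/cocycle bookkeeping and the generation argument for surjectivity simply fill in details of the same method rather than taking a different route.
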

\begin{proof} It is straightforward to check the elements on the
right hand satisfy the relation 1)-4). Note that $a_{ii}=2$ if
$a_{ii}\neq 0$, then the last relation is the direct result of
Proposition \ref{P:KMalg}.
\end{proof}

{\it Proof of Theorem \ref{eq:presentation}}. It follows from
Proposition \ref{homo} that the loop-like toroidal Lie superalgebra
$\mathfrak T(X)$ is a central extension of the 2-loop superalgebra
$\mathfrak g\otimes \mathbb C[t, t^{-1}, s, s^{-1}]$, as the kernel
is contained in the subspace $\oplus_{n,k} \mathfrak T^{n\delta}_k$,
which is clearly central by the commutation relations. On the other
hand it is straightforward to check that the algebra $\mathfrak
T(X)$ is a central extension of the loop algebra $L(\mathfrak g)$
and therefore the central extension is isomorphic to $\mathfrak
T(x)$ by the similar arguments as in \cite{MRY}. \qed.

 It will be convenient to rewrite the relations in terms of generating series.
\begin{proposition}\emph{The relations of $\mathfrak{T}(X)$ can be written as
follows.}
 \begin{eqnarray*}
    &1')& [\mathcal{K},\alpha_{i}(z)]=[\mathcal{K},x^{\pm}_{i}(z)]=0; \\
    &2')& [\alpha_{i}(z),\alpha_{j}(w)]=(\alpha_{i}|\alpha_{j})\partial_{w}\delta(z-w)\mathcal{K};\\
    &3')& [\alpha_{i}(z),x^{\pm}_{j}(w)]=\pm(\alpha_{i}|\alpha_{j})x^{\pm}_{j}(w)\delta(z-w); \\
    &4')& [x^{+}_{i}(z),x^{-}_{j}(w)]=0,\mbox{\emph{if}}~i\neq j   ;\\
    &&[x^{+}_{i}(z),x^{-}_{i}(w)]=-\{(\alpha_{i}(w)
    \delta(z-w)+\partial_{w}\delta(z-w)\mathcal{K}\},\mbox{\emph{if}}~(\alpha_{i}|\alpha_{i})=0\\
    &&[x^{+}_{i}(z),x^{-}_{i}(w)]=-\frac{2}{(\alpha_{i}|\alpha_{i})}\{(\alpha_{i}(w)
    \delta(z-w)+\partial_{w}\delta(z-w)\mathcal{K}\},\mbox{\emph{if}}~(\alpha_{i}|\alpha_{i})\neq0\\
    &5')&[x^{\pm}_{i}(z),x^{\pm}_{i}(w)]=0;\\
    &&[x^{\pm}_{i}(z),x^{\pm}_{j}(w)]=0,\mbox{\emph{if}}~a_{ii}=a_{ij}=0,i\neq j;\\
    &&[x^{\pm}_{i}(z_{1}),[x^{\pm}_{i}(z_{2}),x^{\pm}_{j}(w)]]=0,\mbox{\emph{if}}~a_{ii}=0,a_{ij}\neq0;\\
    &&[x^{\pm}_{i}(z_{1}),\cdots,[x^{\pm}_{i}(z_{1-a_{ij}}),x^{\pm}_{j}(w)]\cdots]=0,
    \mbox{\emph{if}}~a_{ii}\neq0,i\neq j.
    \end{eqnarray*}
where
$\alpha_{i}(z)=\sum_{k\in\mathbb{Z}}\alpha_{i}(k)z^{-k-1},\quad
x^{\pm}_{i}(z) =\sum_{k\in\mathbb{Z}}x^{\pm}_{i}(k)z^{-k-1}.$
\end{proposition}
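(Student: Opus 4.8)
The plan is to derive each primed relation from its unprimed counterpart by multiplying by the appropriate monomial in the spectral parameters and summing over all mode indices, using the two formal identities $\sum_{k\in\mathbb{Z}}z^{-k-1}w^{k}=\delta(z-w)$ and $\sum_{k\in\mathbb{Z}}kz^{-k-1}w^{k-1}=\partial_{w}\delta(z-w)$, together with the substitution rule $f(z,w)\delta(z-w)=f(w,w)\delta(z-w)$ recorded in Section~2.

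First I would dispose of the linear relations. Relation $1')$ is immediate from $1)$ by linearity. For $2')$, multiplying $[\alpha_{i}(k),\alpha_{j}(l)]=k(\alpha_{i}|\alpha_{j})\delta_{k,-l}\mathcal{K}$ by $z^{-k-1}w^{-l-1}$ and summing collapses the Kronecker delta and leaves $(\alpha_{i}|\alpha_{j})\mathcal{K}\sum_{k}kz^{-k-1}w^{k-1}$, which is exactly $(\alpha_{i}|\alpha_{j})\partial_{w}\delta(z-w)\mathcal{K}$. For $3')$, the shift in $x^{\pm}_{j}(k+l)$ is handled by reindexing $p=k+l$, so that the double sum factors as $\big(\sum_{p}x^{\pm}_{j}(p)w^{-p-1}\big)\big(\sum_{k}z^{-k-1}w^{k}\big)=x^{\pm}_{j}(w)\delta(z-w)$.

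Relation $4')$ combines the two previous mechanisms. The case $i\neq j$ is immediate. For $i=j$ the summand $\alpha_{i}(k+l)$ becomes $\alpha_{i}(w)\delta(z-w)$ by the reindexing used for $3')$, while the central term $k\delta_{k,-l}\mathcal{K}$ becomes $\partial_{w}\delta(z-w)\mathcal{K}$ exactly as in $2')$; the isotropic and non-isotropic cases then differ only by the scalar prefactor, recovering both displayed lines.

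The remaining work is relation $5')$, which I expect to be the main obstacle. The two abelian relations translate directly. For the nested Serre relations the generating-series form carries \emph{independent} spectral parameters $z_{1},\dots,z_{1-a_{ij}}$, so one sums the mode identity over all independent indices, attaching one spectral parameter to each bracket slot. The delicate point is the super-sign bookkeeping: when $a_{ii}=0$ the field $x^{\pm}_{i}$ is odd, so reordering its bracketed copies introduces signs, and one must use the abelian relation $[x^{\pm}_{i}(z_{1}),x^{\pm}_{i}(z_{2})]=0$ together with the graded Jacobi identity to reconcile the independent-parameter expression with the mode relation. Keeping these signs straight, rather than any analytic difficulty, is where the argument requires genuine care.
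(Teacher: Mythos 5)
Your mode-summation mechanism for relations $1')$--$4')$ is exactly the routine translation the paper intends (the paper in fact states this proposition with no proof at all), and that part of your argument is correct: the Kronecker delta collapses to $\delta(z-w)$, the index shift $k+l$ is handled by your reindexing, and $k\delta_{k,-l}$ produces $\partial_w\delta(z-w)$. One caveat: the isotropic line of $4')$ has no unprimed counterpart in Theorem~\ref{eq:presentation} as printed, so you are not ``recovering'' it but silently supplying the missing mode relation $[x_i^+(k),x_i^-(l)]=-\{\alpha_i(k+l)+k\delta_{k,-l}\mathcal{K}\}$ for $(\alpha_i|\alpha_i)=0$; this should be said explicitly.

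The genuine gap is in your treatment of $5')$. Theorem~\ref{eq:presentation} writes the Serre relations with a single repeated mode index $k$ in every $x_i^{\pm}$ slot, whereas $5')$ carries independent variables $z_1,z_2,\dots$, i.e.\ independent mode indices, and your proposed reconciliation via the abelian relation plus the graded Jacobi identity cannot bridge this. Those two facts only show that $S(z_1,z_2)=[x_i^{\pm}(z_1),[x_i^{\pm}(z_2),x_j^{\pm}(w)]]$ is symmetric in $z_1,z_2$ when $x_i^{\pm}$ is even and antisymmetric when it is odd; (anti)symmetry together with vanishing at equal indices does not force vanishing, because $x_i^{\pm}(k_1)$ and $x_i^{\pm}(k_2)$ are independent generators and no polarization identity is available --- indeed one can construct a Lie algebra containing elements $a_k,b$ with $[a_k,a_l]=0$ and $[a_k,[a_k,b]]=0$ for all $k,l$ yet $[a_1,[a_2,b]]\neq 0$. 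The situation is starkest when $x_i^{\pm}$ is odd: the graded Jacobi identity gives $2[a,[a,b]]=[[a,a],b]$ for odd $a$, so the equal-index Serre relation is already a formal consequence of $[x_i^{\pm}(k),x_i^{\pm}(k)]=0$ and carries no information, while the independent-parameter relation in $5')$ is strictly stronger; no amount of sign bookkeeping can close that gap. (In the even case one \emph{can} polarize, but only by invoking relation $3)$ and applying $\operatorname{ad}\alpha_i(m)$ to shift indices one slot at a time --- a tool absent from your argument.) The honest fix is to read the Serre relations of Theorem~\ref{eq:presentation} with independent mode indices $k_1,\dots,k_{1-a_{ij}}$, which is what the authors must intend, since with the equal-index reading the presentation would impose vacuous relations at all odd simple roots and could not define $T(\mathfrak{g})$; under that reading $5')$ follows by exactly the same term-by-term summation as $1')$--$4')$, and the sign issues you anticipate never arise because no fields are ever reordered.
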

\section{Free field realizations of $\mathfrak{T}(X)$}
In this section, we shall construct free field representations of
toroidal superalgebras defined in last section.

 For type $A(m,n)$, let $\varepsilon_{i}(0\leqslant i\leqslant n+m+3)$ be an orthomormal
basis of the vector space $\mathbb{C}^{n+m+4}$ and
$\delta_{i}=\sqrt{-1}\varepsilon_{m+1+i}~(1\leqslant i\leqslant
n+1)$. For type $B(m,n),D(m,n)$£¬ let $\varepsilon_{i}(0\leqslant
i\leqslant n+m+1)$ be an orthomormal basis of the vector space
$\mathbb{C}^{n+m+2}$ and
$\delta_{i}=\sqrt{-1}\varepsilon_{n+i}~(1\leqslant i \leqslant m+1)$. 
For type $C(n)$, let $\varepsilon_{i}(0\leqslant i\leqslant
n+m+2)$ be an orthomormal basis of the vector space
$\mathbb{C}^{n+3}$ and denote by
$\delta_{i}=\sqrt{-1}\varepsilon_{1+i}~(1\leqslant i\leqslant n+1)$.
Then the distinguished simple root systems, positive root systems
and longest distinguished root of the orthosymplectic superalgebras
can be expressed in terms of vectors $\varepsilon_{i}$'s and
$\delta_{i}$ 's as follows:
\begin{eqnarray*}
&\mbox{i})&\mbox{Type}~A(m,n)~(m,n\geqslant1):\\
&&\Pi=\big\{\alpha_{1}=\varepsilon_{1}-\varepsilon_{2},\cdots,
,\alpha_{m}=\varepsilon_{m}-\varepsilon_{m+1},\alpha_{m+1}=\varepsilon_{m+1}-\delta_{1},\\
&&\qquad\alpha_{n+2}=\delta_{1}-\delta_{2},\cdots,\alpha_{n+m+1}=\delta_{n}-\delta_{n+1}\big\};\\
&&\triangle_{+}=\big\{\varepsilon_{i}-\varepsilon_{j},\delta_{k}-\delta_{l}|1\leqslant
i<j\leqslant n+1,1\leqslant k<l\leqslant m+1\big\}\\
&&\qquad\cup\big\{\delta_{k}-\varepsilon_{i}|1\leqslant i\leqslant n+1,1\leqslant k\leqslant m+1\big\};\\
&&\theta=\alpha_{1}+\cdots+\alpha_{m+n+1}=\varepsilon_{1}-\delta_{n+1}.\\
&\mbox{ii})&\mbox{Type}~B(0,n)~(n\geqslant1):\\
&&\Pi=\big\{\alpha_{1}=\varepsilon_{1}-\varepsilon_{2},\cdots,
\alpha_{n-1}=\varepsilon_{n-1}-\varepsilon_{n},\alpha_{n}=\varepsilon_{n}\big\};\\
&&\triangle_{+}=\big\{\varepsilon_{i}\pm\varepsilon_{j}|1\leqslant i<j\leqslant n\big\}
\cup\big\{\varepsilon_{i},2\varepsilon_{i}|1\leqslant i\leqslant n\big\};\\
&&\theta=2\alpha_{1}+\cdots+2\alpha_{n}=2\varepsilon_{1}.\\
&\mbox{iii})&\mbox{Type}B(m,n)~(m\geqslant1,n\geqslant1):\\
&&\Pi=\big\{\alpha_{1}=\varepsilon_{1}-\varepsilon_{2},\cdots,
\alpha_{n-1}=\varepsilon_{n-1}-\varepsilon_{n},\alpha_{n}=\varepsilon_{n}-\delta_{1}\\
&&\quad\qquad\alpha_{n+1}=\delta_{1}-\delta_{2},\cdots,\alpha_{n+m-1}=\delta_{m-1}-\delta_{m},
\alpha_{n+m}=\delta_{m}\big\}\\
&&\triangle_{+}=\big\{\varepsilon_{i}\pm\varepsilon_{j},
\delta_{k}\pm\delta_{l},|1\leqslant i< j\leqslant n,1\leqslant k<
l\leqslant m\big\}\\
&&\quad\quad\cup
\big\{2\varepsilon_{i},\delta_{k},\varepsilon_{i}\pm\delta_{k}|
1\leqslant i\leqslant n,1\leqslant k\leqslant m\big\}\\
&&\theta=2\alpha_{1}+\cdots+2\alpha_{n+m}=2\varepsilon_{1}.\\
&\mbox{iv})& \mbox{Type}~D(m,n)~(m\geqslant2,n\geqslant1):\\
&&\triangle_{+}=\big\{\varepsilon_{i}\pm\varepsilon_{j},
\delta_{k}\pm\delta_{l}|1\leqslant i<j\leqslant n,1\leqslant k< l\leqslant m\big\},\\
&&\qquad\cup\big\{2\delta_{k},\varepsilon_{i}\pm\delta_{i}|1\leqslant
i\leqslant n,1\leqslant k\leqslant n\big\}\\
&&\Pi=\big\{\alpha_{1}=\varepsilon_{1}-\varepsilon_{2},\cdots,
\alpha_{n-1}=\varepsilon_{n-1}-\varepsilon_{n},\alpha_{n}=\varepsilon_{n}-\delta_{1},\\
&&\qquad\cdots\alpha_{n+1}=\delta_{1}-\delta_{2},
\alpha_{n+m+1}=\delta_{m-1}-\varepsilon_{m},\alpha_{n+m}=\delta_{m-1}+\delta{m}\big\}\\
&&\theta=2\alpha_{1}+\cdots+2\alpha_{n+m-2}+\alpha_{n+m-1}+\alpha_{n+m}=2\varepsilon_{1}\\
&\mbox{v})&\mbox{Type}~C(n)~(n\geqslant1):\\
&&\Pi=\big\{\alpha_{1}=\varepsilon_{1}-\delta_{1},\alpha_{2}=\delta_{1}-\delta_{2},\cdots,
\alpha_{n}=\delta_{n-1}-\delta_{n},\alpha_{n+1}=2\delta_{n}\big\};\\
&&\triangle_{+}=\big\{\delta_{k}\pm\delta_{l}|1\leqslant
k<l\leqslant n\big\}\cup
\big\{2\delta_{k},\varepsilon_{1}\pm\delta_{k}|1\leqslant k\leqslant
n\big\};\\
&&\theta=\alpha_{1}+2\alpha_{2}+\cdots+2\alpha_{n}+\alpha_{n+1}=\varepsilon_{1}+\delta_{1}.\\
\end{eqnarray*}

We introduce  $\overline{c}=\varepsilon_{0}+\delta_{n+2}$ for type
$A(m,n)$, $\overline{c}=\varepsilon_{0}+\delta_{m+1}$ for type
$B(m,n),D(m,n)$ and $\overline{c}=\varepsilon_{0}+\delta_{n+1}$ for
type $C(n)$, then $\alpha_{0}=\overline{c}-\theta$. Furthermore we
define
$$\beta=\left\{
\begin{array}{lll}
\varepsilon_{1}-\overline{c},&\mbox{for}~A(m,n);\\
\varepsilon_{1}-\frac{1}{2}\overline{c},&\mbox{for}~B(m,n),D(m,n);\\
\delta_{1}-\overline{c},&\mbox{for}~C(n).
\end{array}
\right.$$ Then we have
$(\beta|\beta)=1,(\beta|\varepsilon_{i})=\delta_{1i}$,
$\alpha_{0}=-\beta+\delta_{n+1}$ for type $A(m,n)$
$(\beta|\beta)=1,(\beta|\varepsilon_{i})=\delta_{1i},
\alpha_{0}=-2\beta$ for type $B(m,n),D(m,n)$ and
$(\beta|\beta)=-1,(\beta|\delta_{i})=-\delta_{1i},\alpha_{0}=-\beta-\varepsilon_{1}$
for type $C(n)$.

\bigskip

Let $\mathcal {P}_{\overline{0}}$ and $\mathcal {P}_{\overline{1}}$
be the vector spaces defined  for each case  as follows:
$$\mathcal {P}_{\overline{0}}=\left\{
 \begin{array}{lll}
\mbox{span}_{\mathbb{C}}\{\overline{c},\varepsilon_{i}|1\leqslant
i\leqslant m+1\},&\quad\mbox{for}~A(m,n)\\
\mbox{span}_{\mathbb{C}}\{\overline{c},\varepsilon_{i}|1\leqslant
i\leqslant n\},&\quad\mbox{for}~B(m,n),D(m,n)\\
\mbox{span}_{\mathbb{C}}\{\overline{c},\delta_{i}|1\leqslant
i\leqslant n\},&\quad\mbox{for}~C(n).
 \end{array}
 \right.$$
 $$\mathcal {P}_{\overline{1}}=\left\{
 \begin{array}{lll}
\mbox{span}_{\mathbb{C}}\{\delta_{i}|1\leqslant
i\leqslant m+1\},&\quad\mbox{for}~A(m,n)\\
\mbox{span}_{\mathbb{C}}\{\delta_{i}|1\leqslant
i\leqslant n\},&\quad\mbox{for}~B(m,n),D(m,n)\\
\mbox{span}_{\mathbb{C}}\{\varepsilon_{1}\},&\quad\mbox{for}~C(n).
 \end{array}
 \right.$$
and set $\mathcal {C}_{\overline{0}}=\mathcal
{P}_{\overline{0}}\oplus\mathcal {P}_{\overline{0}}^{*},\mathcal
{C}_{\overline{1}}=\mathcal
{P}_{\overline{1}}\oplus\mathcal{P}_{\overline{1}}^{*}$, then we
form the superspace: $\mathcal {C}=\mathcal
{C}_{\overline{0}}\oplus\mathcal {C}_{\overline{1}}$. We define a
bilinear form on $\mathcal {C}$ by: $\mbox{for}~ a,b\in\mathcal
{P}_{\overline{0}}\cup\mathcal {P}_{\overline{1}}$
\begin{eqnarray*}
   && \langle b^{*},a\rangle=-(-1)^{p(a)p(b)}\langle
a,b^{*}\rangle=(a,b); \\
  &&\langle b,a\rangle=-(-1)^{p(a)p(b)}\langle
a^{*},b^{*}\rangle=0,
\end{eqnarray*}

Let $\mathcal {A}(\mathbb{Z}^{2n+2|2m})$ be the associated
superalgebra generated by
$$\{u(k)|u\in \mathcal {C}_{\overline{0}}\cup \mathcal {C}_{\overline{1}},k\in\mathbb{Z}\}$$
with the parity $p(u(k))=p(u)$ and the defining relations
$$u(k)v(l)-(-1)^{p(k)p(l)}u(k)v(l)=\langle u,v\rangle\delta_{k,-l}$$
for $u,v\in \mathcal {C}_{\overline{0}}\cup\mathcal
{C}_{\overline{1}}$ and $ k,l\in\mathbb{Z}$.

The representation space of the superalgebras $\mathcal
{A}(\mathbb{Z}^{2n+1|2m})$ is defined to be the following vector
space:

$$V(\mathbb{Z}^{n+1|m})=\bigotimes_{a_{i}}\Big(\bigotimes_{k\in\mathbb{Z}_{+}}\mathbb{C}[a_{i}(-k)]
\bigotimes_{k\in\mathbb{Z}_{+}}\mathbb{C}[a^{*}_{i}(-k)]\Big)$$
where $a_{i}$ runs though any basis in $\mathcal{P}_{\overline{0}}$
and $\mathcal{P}_{\overline{1}}$, consisting of, say
$\overline{c},\varepsilon_{i}$'s and $\delta_{k}$'s. The
superalgebra $\mathcal {A}(\mathbb{Z}^{2n+1|2m})$ acts on the space
by the usual action: $a(-k)$ act as creation operators and $a(k)$ as
annihation operators.

To construct $\mathfrak{T}(B(m,n))$, we need to extend the
superalgebra $\mathcal {A}(\mathbb{Z}^{2n+1|2m})$ by adding a new
$e$ to the basis of the odd space $\mathcal {C}_{\overline{1}}$  and
extending the bilinear form define above by $\langle
e,e\rangle=-2,\quad\langle e,\mathcal {C}\rangle=0$. The larger
superalgebra is denoted by  $\mathcal {A}(\mathbb{Z}^{2n+1|2m+1})$
and its representation space is defined to be
$$V(\mathbb{Z}^{n+1|m+1})=V(\mathbb{Z}^{n+1|m})\bigotimes_{k\in\mathbb{Z}_{+}}e(-k)$$
with $e(-k)(\mbox{resp.}~e(k))$ act as creation (resp.annihation)
operators and $e(0)$ as scalar $\sqrt{-1}$.

For any $u\in \mathcal{C}_{0}\cup\mathcal{C}_{1}$, we define the
following fromal power series with cofficients coming from the
superalgebra $\mathcal{A}(\mathbb{Z}^{2n+1|2m})$:
$$u(z)=\sum_{k\in\mathbb{Z}}u(k)z^{-k-1}$$
It is easy to see that $u(z)$ is a field on $V$ with the parity
$p(u)$.

\begin{proposition} The basic operator product expansions are :for
$u,v\in \mathcal{C}_{0}\cup\mathcal{C}_{1}$, we have
\begin{eqnarray*}
  \underbrace{u(z)v(w)}=\frac{\langle u,v\rangle}{z-w}
\end{eqnarray*}
In particular we have for
$a,b\in\mathcal{P}_{\overline{0}}\cup\mathcal{P}_{\overline{1}}$
\begin{eqnarray*}
   &&\underbrace{a(z)b(w)}= \underbrace{a^{*}(z)b^{*}(w)}=0;\\
   &&\underbrace{a(z)b^{*}(w)}=\frac{\langle a,b^{*}\rangle}{z-w},\quad
   \underbrace{a^{*}(z)b(w)}=\frac{\langle a^{*},b\rangle}{z-w}\\
   &&\underbrace{e(z)a(w)}= \underbrace{e(z)a^{*}(w)}=0;\quad
\underbrace{e(z)e(w)}=\frac{-2}{z-w}
\end{eqnarray*}
\end{proposition}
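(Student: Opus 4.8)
The plan is to reduce every contraction to a single supercommutator of modes, evaluate one geometric series, and then obtain all the displayed special cases by substituting the explicit values of the pairing $\langle\cdot,\cdot\rangle$. First I would unwind the definitions of normal ordering and of the contraction. Writing $u(z)=u(z)_{+}+u(z)_{-}$ and substituting into $\underbrace{u(z)v(w)}=u(z)v(w)-:u(z)v(w):$, the creation term $u(z)_{+}v(w)$ appearing in $u(z)v(w)$ cancels the identical term in $:u(z)v(w):$, leaving
$$\underbrace{u(z)v(w)}=u(z)_{-}v(w)-(-1)^{p(u)p(v)}v(w)u(z)_{-}=[u(z)_{-},v(w)],$$
the superbracket of the annihilation part of $u(z)$ with $v(w)$. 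This reduction is the real content of the statement; it is convention independent and holds for every pair $u,v\in\mathcal{C}_{\overline{0}}\cup\mathcal{C}_{\overline{1}}$.

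Next I would expand both factors into modes,
$$[u(z)_{-},v(w)]=\sum_{k\geqslant 0}\sum_{l\in\mathbb{Z}}[u(k),v(l)]\,z^{-k-1}w^{-l-1},$$
and insert the defining supercommutation relation of $\mathcal{A}(\mathbb{Z}^{2n+1|2m})$, by which $[u(k),v(l)]$ equals $\langle u,v\rangle$ times a Kronecker delta (supplemented by $\langle e,e\rangle=-2$ and $\langle e,\mathcal{C}\rangle=0$ in the $B(m,n)$ extension). Since $u(z)_{-}$ keeps only the non-negative modes $u(k)$ with $k\geqslant 0$, and the delta selects a single $l$ for each such $k$, the double sum collapses to a one-sided geometric series in $w/z$ with overall coefficient $\langle u,v\rangle$; summing it in the standing domain $|z|>|w|$ yields $\underbrace{u(z)v(w)}=\langle u,v\rangle/(z-w)$, the asserted general formula.

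Finally I would read off the particular contractions from the definition of the pairing on $\mathcal{C}$. Since $\langle a,b\rangle=0$ and $\langle a^{*},b^{*}\rangle=0$ for $a,b\in\mathcal{P}_{\overline{0}}\cup\mathcal{P}_{\overline{1}}$, the contractions $\underbrace{a(z)b(w)}$ and $\underbrace{a^{*}(z)b^{*}(w)}$ vanish; the mixed pairs reproduce $\langle a,b^{*}\rangle/(z-w)$ and $\langle a^{*},b\rangle/(z-w)$; and $\langle e,\mathcal{C}\rangle=0$, $\langle e,e\rangle=-2$ give the vanishing of $\underbrace{e(z)a(w)}$ and $\underbrace{e(z)a^{*}(w)}$ together with the value $-2/(z-w)$ for $\underbrace{e(z)e(w)}$. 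I expect the only delicate point to be the bookkeeping in the first step: one must track the parity sign $(-1)^{p(u)p(v)}$ through the cancellation and respect the truncation of $u(z)_{-}$ to non-negative modes, since it is exactly this truncation that converts the two-sided expansion (a multiple of $\delta(z-w)$) into the one-sided expansion $1/(z-w)$ of the singular part.
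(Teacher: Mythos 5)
Your argument is correct, and it is the standard computation; note that the paper does not actually prove this proposition but merely refers to the analogous bosonic statement (Proposition 3.1 of \cite{JMX}), so your write-up supplies exactly the argument that citation stands for: the cancellation $\underbrace{u(z)v(w)}=u(z)v(w)-:u(z)v(w):\,=[u(z)_{-},v(w)]$, the mode expansion, and a geometric series summed in the region $|z|>|w|$, followed by substitution of the values of $\langle\cdot,\cdot\rangle$. The one place where you must be more precise is the phrase ``$[u(k),v(l)]$ equals $\langle u,v\rangle$ times a Kronecker delta.'' With the paper's conventions $u(z)=\sum_{k}u(k)z^{-k-1}$ and $u(z)_{-}=\sum_{k\geqslant 0}u(k)z^{-k-1}$, the relation as literally printed in the paper, $[u(k),v(l)]=\langle u,v\rangle\delta_{k,-l}$, collapses your double sum to $\langle u,v\rangle\sum_{k\geqslant 0}z^{-k-1}w^{k-1}=\langle u,v\rangle/\bigl(w(z-w)\bigr)$, which is \emph{not} the asserted OPE. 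The normalization that produces $\langle u,v\rangle/(z-w)$, and equally the supercommutator $[u(z),v(w)]=\langle u,v\rangle\delta(z-w)$ of the following proposition, is the shifted one, $[u(k),v(l)]=\langle u,v\rangle\delta_{k+l+1,0}$; the $\delta_{k,-l}$ in the paper's definition of $\mathcal{A}$ is evidently a misprint. Since your proof is the only step at which the mode relation is actually invoked, you should display the Kronecker delta explicitly in its shifted form rather than leave it unspecified; with that made explicit, the rest of your argument (the sign bookkeeping through the normal ordering, the truncation of $u(z)_{-}$ to $k\geqslant 0$, and the specializations using $\langle a,b\rangle=\langle a^{*},b^{*}\rangle=0$, $\langle e,\mathcal{C}\rangle=0$, $\langle e,e\rangle=-2$) is complete and correct.
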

\begin{proof} One can prove the propostion similarly to Proposition 3.1 in \cite{JMX}.
\end{proof}
\begin{proposition}
For $u,v\in \mathcal{C}_{0}\cup\mathcal{C}_{1}$, we have
\begin{eqnarray*}
  &&[u(z),v(w)]=\langle u,v\rangle\delta(z-w),\\
  &&[e(z),u(w)]=0,\quad [e(z),e(w)]=-2\delta(z-w)
\end{eqnarray*}
\end{proposition}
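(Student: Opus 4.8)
The plan is to read off the commutation relations directly from the operator product expansions of the previous proposition, using the standard dictionary between contractions and commutators of fields, so that no mode-by-mode computation in $\mathcal{A}$ is needed. The starting point is the identity, valid for any two fields $a(z),b(w)$ of definite parity,
$$[a(z),b(w)]=\underbrace{a(z)b(w)}-(-1)^{p(a)p(b)}\underbrace{b(w)a(z)},$$
which follows from $\underbrace{a(z)b(w)}=a(z)b(w)-:a(z)b(w):$ together with the supercommutativity $:a(z)b(w):=(-1)^{p(a)p(b)}:b(w)a(z):$: the two normally ordered terms cancel and only the superbracket survives. The crucial point to carry along is that on the right-hand side the first contraction is to be expanded in the region $|z|>|w|$ and the second in $|w|>|z|$.

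First I would substitute the expansion $\underbrace{u(z)v(w)}=\langle u,v\rangle/(z-w)$ from the previous proposition, reading its right-hand side as $\langle u,v\rangle\, i_{z,w}(z-w)^{-1}$, and likewise $\underbrace{v(w)u(z)}=\langle v,u\rangle/(w-z)$, read as $\langle v,u\rangle\, i_{w,z}(w-z)^{-1}$. Next I would use the graded skew-symmetry of the bilinear form that is built into its definition, namely $\langle v,u\rangle=-(-1)^{p(u)p(v)}\langle u,v\rangle$, so that the prefactor $-(-1)^{p(u)p(v)}\langle v,u\rangle$ of the second term collapses to $\langle u,v\rangle$. Writing $i_{w,z}(w-z)^{-1}=-\,i_{w,z}(z-w)^{-1}$, the two contributions assemble into
$$[u(z),v(w)]=\langle u,v\rangle\Big(i_{z,w}\tfrac{1}{z-w}-i_{w,z}\tfrac{1}{z-w}\Big)=\langle u,v\rangle\,\delta(z-w),$$
where the last equality is exactly the case $j=0$ of the delta-function identity $\partial^{(j)}_{w}\delta(z-w)=i_{z,w}(z-w)^{-j-1}-i_{w,z}(z-w)^{-j-1}$ recalled in the preliminaries.

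The relations involving the ghost field $e(z)$ are then immediate specializations of this computation. Since $\langle e,u\rangle=0$ for every $u\in\mathcal{C}_{\overline{0}}\cup\mathcal{C}_{\overline{1}}$, the same argument yields $[e(z),u(w)]=0$, and since $\langle e,e\rangle=-2$ it yields $[e(z),e(w)]=-2\,\delta(z-w)$; here one should note $e$ is odd, so the parity bookkeeping is the odd--odd case again.

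The only delicate point, and the step I expect to require the most care, is the sign bookkeeping in the $\mathbb{Z}_2$-graded setting: I must verify that the supercommutativity of the normally ordered product, the graded skew-symmetry $\langle v,u\rangle=-(-1)^{p(u)p(v)}\langle u,v\rangle$, and the parity factor $(-1)^{p(u)p(v)}$ in the superbracket conspire so that the two expansions of the simple pole \emph{subtract} (producing $\delta(z-w)$) rather than cancel. For an even pair this is the familiar bosonic calculation; for a pair of odd fields one checks that the extra sign coming from supercommutativity is precisely compensated by the sign in the skew-symmetry relation, so that the conclusion $[u(z),v(w)]=\langle u,v\rangle\delta(z-w)$ holds uniformly across all parities.
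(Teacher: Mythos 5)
Your proposal is correct and is essentially the paper's own proof: both expand the superbracket as the difference of the two contractions (the normally ordered terms cancelling by the supercommutativity $:u(z)v(w):\,=(-1)^{p(u)p(v)}:v(w)u(z):$), then combine the graded skew-symmetry $\langle v,u\rangle=-(-1)^{p(u)p(v)}\langle u,v\rangle$ with the two-sided expansion identity for $\delta(z-w)$. Your write-up is only more explicit than the paper's about the $i_{z,w}$ versus $i_{w,z}$ expansion domains and about the ghost-field cases $\langle e,u\rangle=0$, $\langle e,e\rangle=-2$, which the paper treats as immediate.
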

\begin{proof} In fact, we have
\begin{eqnarray*}
   [u(z),v(w)]&=& u(z)v(w)-(-1)^{p(a)p(b)}v(w)u(z) \\
   &=&\frac{\langle u,v\rangle}{z-w}-(-1)^{p(a)p(b)}\frac{\langle v,u\rangle}{w-z} \\
   &=&\langle u,v\rangle\delta(z-w)
\end{eqnarray*}
where we have used the fact $:u(z)v(w):=(-1)^{p(a)p(b)}:v(w)u(z):$
\end{proof}

The following proposition can be proved by the Wick's theorem.

\begin{proposition} \label{P:comm1} The commutators among normally ordered products
obey the following rules: \\
 \emph{1)}If $r_{i}\in
\mathcal{C}_{\overline{0}},1\leqslant i\leqslant 4$, then
\begin{eqnarray*}
   && [:r_{1}(z)r_{2}(z):,:r_{3}(w)r_{4}(w):]\\
  &=& \langle r_{1},r_{4}\rangle :r_{2}(z)r_{3}(z):\delta(z-w)+ \langle r_{1},r_{3}\rangle :r_{2}(z)r_{4}(z):\delta(z-w)\\
  &+& \langle r_{2},r_{3}\rangle :r_{1}(z)r_{4}(z):\delta(z-w)+ \langle r_{2},r_{4}\rangle :r_{1}(z)r_{3}(z):\delta(z-w) \\
   &+&\big(\langle r_{1},r_{4}\rangle\langle r_{2},r_{3}\rangle+
   \langle r_{1},r_{3}\rangle \langle r_{2},r_{4}\rangle\big)\partial_{w}\delta(z-w)
\end{eqnarray*}
\emph{2)}If $r_{1},r_{2}\in \mathcal{C}_{\overline{0}},r_{3}\in
\mathcal{C}_{\overline{1}}$ and $ r_{4}\in
\mathcal{C}_{\overline{1}}\cup\{e\}$, then
\begin{eqnarray*}
   && [:r_{1}(z)r_{2}(z):,:r_{3}(w)r_{4}(w):]=0
\end{eqnarray*}
\emph{3)}If $r_{1},r_{3}\in \mathcal{C}_{\overline{1}}$ and
$r_{2},r_{4}\in \mathcal{C}_{\overline{1}}\cup\{e\}$, then
\begin{eqnarray*}
   && [:r_{1}(z)r_{2}(z):,:r_{3}(w)r_{4}(w):]\\
  &=& \langle r_{1},r_{4}\rangle :r_{2}(z)r_{3}(z):\delta(z-w)- \langle r_{1},r_{3}\rangle :r_{2}(z)r_{4}(z):\delta(z-w)\\
  &+& \langle r_{2},r_{3}\rangle :r_{1}(z)r_{4}(z):\delta(z-w)- \langle r_{2},r_{4}\rangle :r_{1}(z)r_{3}(z):\delta(z-w) \\
   &+&\big(\langle r_{1},r_{4}\rangle\langle r_{2},r_{3}\rangle-
   \langle r_{1},r_{3}\rangle \langle r_{2},r_{4}\rangle\big)\partial_{w}\delta(z-w)
\end{eqnarray*}
\emph{4)}If $r_{1},r_{3}\in \mathcal{C}_{\overline{0}}$ and
$r_{2},r_{4}\in \mathcal{C}_{\overline{1}}\cup\{e\}$, then
\begin{eqnarray*}
   && [:r_{1}(z)r_{2}(z):,:r_{3}(w)r_{4}(w):]\\
  &=& \langle r_{1},r_{3}\rangle :r_{2}(z)r_{4}(z):\delta(z-w)+
   \langle r_{2},r_{4}\rangle :r_{1}(z)r_{3}(z):\delta(z-w) \\
   &+&\langle r_{1},r_{3}\rangle\langle r_{2},r_{4}\rangle\partial_{w}\delta(z-w)
\end{eqnarray*}
\emph{5)}If $r_{1},r_{2},r_{3}\in \mathcal{C}_{\overline{0}}$ and
$r_{4}\in \mathcal{C}_{\overline{1}}\cup\{e\}$, then
\begin{eqnarray*}
   && [:r_{1}(z)r_{2}(z):,:r_{3}(w)r_{4}(w):]\\
  &=& \langle r_{1},r_{3}\rangle :r_{2}(z)r_{4}(z):\delta(z-w)+
   \langle r_{2},r_{3}\rangle :r_{1}(z)r_{4}(z):\delta(z-w)
\end{eqnarray*}
\emph{6)}If $r_{1},r_{2}\in \mathcal{C}_{\overline{1}}$ and
$r_{3}\in\mathcal{C}_{0},r_{4}\in
\mathcal{C}_{\overline{1}}\cup\{e\}$, then
\begin{eqnarray*}
   && [:r_{1}(z)r_{2}(z):,:r_{3}(w)r_{4}(w):]\\
  &=& -\langle r_{1},r_{4}\rangle :r_{2}(z)r_{3}(z):\delta(z-w)+ \langle r_{1},r_{3}\rangle :r_{2}(z)r_{4}(z):\delta(z-w)\\
  && -\langle r_{2},r_{3}\rangle :r_{1}(z)r_{4}(z):\delta(z-w)+ \langle r_{2},r_{4}\rangle :r_{1}(z)r_{3}(z):\delta(z-w) \\
   &&+\big(\langle r_{1},r_{3}\rangle \langle r_{2},r_{4}\rangle-\langle r_{1},r_{4}\rangle\langle r_{2},r_{3}\rangle\big)\partial_{w}\delta(z-w)
\end{eqnarray*}
\end{proposition}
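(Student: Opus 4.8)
The plan is to apply Wick's theorem to the product $:r_1(z)r_2(z):\,:r_3(w)r_4(w):$ and to read off the superbracket from the singular part of the resulting operator product expansion. First I would record which contractions can occur. By the basic operator product expansions the only nonzero contractions are $\underbrace{a(z)b^*(w)}=\langle a,b^*\rangle/(z-w)$ and $\underbrace{a^*(z)b(w)}=\langle a^*,b\rangle/(z-w)$, together with $\underbrace{e(z)e(w)}=-2/(z-w)$, and these all vanish between fields of opposite parity (so that $\underbrace{a(z)b(w)}=\underbrace{a^*(z)b^*(w)}=0$ and $e$ contracts only with itself). Since Wick's theorem contracts only fields of the first collection $\{r_1,r_2\}$ with those of the second $\{r_3,r_4\}$ — each factor being already normally ordered — the expansion truncates at double contractions. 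A single contraction $\underbrace{r_i(z)r_j(w)}$ leaves the normally ordered product of the two uncontracted fields with a factor $\langle r_i,r_j\rangle/(z-w)$, while a double contraction consumes all four fields and yields a scalar multiple of $1/(z-w)^2$.

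Second, I would convert the expansion into the superbracket. Computing $:r_3(w)r_4(w):\,:r_1(z)r_2(z):$ in the same way and forming $[:r_1r_2:,:r_3r_4:]$, the fully normally ordered (regular) terms cancel and only the contraction terms survive. By the delta-function identities, $1/(z-w)$ becomes $\delta(z-w)$ and $1/(z-w)^2$ becomes $\partial_w\delta(z-w)$, exactly as in the computation of $[u(z),v(w)]=\langle u,v\rangle\delta(z-w)$. Applying $f(z,w)\delta(z-w)=f(z,z)\delta(z-w)$ then places the two surviving fields at a common point, turning each single-contraction term into $\langle r_i,r_j\rangle:(\cdots)(z)(\cdots)(z):\delta(z-w)$ and each double-contraction term into a scalar multiple of $\partial_w\delta(z-w)$. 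The six cases in the statement are precisely the parity configurations of $(r_1,r_2,r_3,r_4)$; within each, the terms whose contraction $\langle r_i,r_j\rangle$ is parity-forbidden simply drop out, which accounts for the differing number of surviving pairings (for instance, in case 5 the odd field $r_4$ cannot contract with any of the even $r_1,r_2,r_3$, so no double contraction occurs and no $\partial_w\delta$ term appears, while in case 4 only the even–even pairing $r_1$–$r_3$ and the odd–odd pairing $r_2$–$r_4$ survive).

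The main obstacle, and the only genuinely delicate point, is the sign bookkeeping. Each contraction requires bringing the two contracted fields to adjacent positions, and by the sign rule in Wick's theorem every transposition of two \emph{adjacent odd} fields reverses the sign; on top of this, forming the superbracket contributes the factor $(-1)^{p(:r_1r_2:)p(:r_3r_4:)}$. In the all-even case 1 every field is even, so all signs are $+$ and the formula follows immediately; the relative minus signs in cases 3 and 6 (such as the $-\langle r_1,r_3\rangle$ term) come precisely from the odd transpositions needed to extract those contractions together with this overall superbracket sign. I would therefore carry out the sign count configuration by configuration, matching the parities $p(r_i)$ against the displayed pattern of $\pm$ in each case, and verify that the scalar coefficients of $\partial_w\delta(z-w)$ (the products $\langle r_i,r_j\rangle\langle r_k,r_l\rangle$) carry the sign inherited from the two contractions of the corresponding double pairing.
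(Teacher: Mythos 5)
Your proposal is correct and takes essentially the same route as the paper: the paper's entire justification is the remark that the proposition ``can be proved by the Wick's theorem,'' and your outline—Wick expansion of $:r_1(z)r_2(z)::r_3(w)r_4(w):$, cancellation of the fully normally ordered terms in the superbracket, conversion of $1/(z-w)$ and $1/(z-w)^2$ into $\delta(z-w)$ and $\partial_w\delta(z-w)$, and parity-based sign bookkeeping for the odd transpositions—is exactly that argument, spelled out in more detail than the paper itself provides.
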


In the following we will define the field operators to each
generating series of $\mathfrak{T}(X)$.

(i) For type $A(m,n)$, we define
$$x_{i}^{+}(z)=
\left\{\begin{array}{lll}
\sqrt{-1}:\delta_{n+1}(z)\beta^{*}(z):,&\quad\mbox{if}~ i=0;\\
\sqrt{-1}:\varepsilon_{i}(z)\varepsilon^{*}_{i+1}(z):,&\quad
\mbox{if}~1\leqslant
i\leqslant m;\\
:\varepsilon_{m+1}(z)\delta^{*}_{1}(z):,&\quad\mbox{if}~i=m+1\\
:\delta_{i-m-1}(z)\delta^{*}_{i-m}(z):,&\quad \mbox{if}~m+2\leqslant
i\leqslant m+n+1.
\end{array}
\right.$$
$$x_{i}^{-}(z)=
\left\{\begin{array}{lll}
\sqrt{-1}:\beta(z)\delta_{n+1}^{*}(z):,&\quad\mbox{if}~ i=0;\\
\sqrt{-1}:\varepsilon_{i}(z)\varepsilon^{*}_{i+1}(z):,&\quad
\mbox{if}~1\leqslant
i\leqslant m;\\
:\varepsilon^{*}_{m+1}(z)\delta_{1}(z):,&\quad\mbox{if}~i=m+1\\
:\delta^{*}_{i-m-1}(z)\delta_{i-m}(z):,&\quad \mbox{if}~m+2\leqslant
i\leqslant m+n+1.
\end{array}
\right.$$
$$\alpha_{i}(z)=
\left\{\begin{array}{lll}
:\delta_{n+1}(z)\delta^{*}_{n+1}(z):-:\beta(z)\beta^{*}(z):,&\quad\mbox{if}~ i=0;\\
:\varepsilon_{i}(z)\varepsilon^{*}_{i}(z):-:\varepsilon_{i+1}(z)\varepsilon^{*}_{i+1}(z):,&\quad \mbox{if}~1\leqslant i\leqslant m;\\
:\varepsilon_{m+1}(z)\varepsilon^{*}_{m+1}(z):-:\delta_{1}(z)\delta^{*}_{1}(z):,&\quad\mbox{if}~i=m+1\\
:\delta_{i-m-1}(z)\delta^{*}_{i-m-1}(z):
   -:\delta_{i-m}(z)\delta^{*}_{i-m}(z):,&\quad
\mbox{if}~m+2\leqslant i\leqslant m+n+1.
\end{array}
\right.$$

(ii) For type $B(0,n)$, we define
$$x_{i}^{+}(z)=
\left\{\begin{array}{lll}
\frac{1}{2}:\beta^{*}(z)\beta^{*}(z):,&\quad\mbox{if}~ i=0;\\
:\varepsilon_{i}(z)\varepsilon^{*}_{i+1}(z):,&\quad
\mbox{if}~1\leqslant
i\leqslant n-1;\\
:\varepsilon_{n}(z)e(z):,&\quad \mbox{if}~\mbox{if}~i=n.
\end{array}
\right.$$
$$x_{i}^{-}(z)=
\left\{\begin{array}{lll}
\frac{1}{2}:\beta(z)\beta(z):,&\quad i=0;\\
-:\varepsilon_{i+1}(z)\varepsilon^{*}_{i}(z):,&\quad
\mbox{if}~1\leqslant
i\leqslant n-1;\\
:\varepsilon^{*}_{n}(z)e(z):,&\quad\mbox{if}~ i=n.
\end{array}
\right.$$
$$\alpha_{i}(z)=
\left\{\begin{array}{lll}
-2:\beta(z)\beta^{*}(z):,&\quad\mbox{if}~ i=0;\\
:\varepsilon_{i}(z)\varepsilon^{*}_{i}(z):-:\varepsilon_{i+1}(z)\varepsilon^{*}_{i+1}(z):,&\quad
\mbox{if}~1\leqslant i\leqslant n-1;\\
:\varepsilon_{n}(z)\varepsilon^{*}_{n}(z):,&\quad \mbox{if}~i=n.
\end{array}
\right.$$

\bigskip

(iii) For type $B(m,n)~(m\geqslant 1)$, the
$x_{i}^{\pm}(z),\alpha_{i}(z) ~(0\leqslant i\leqslant n-1)$ is same
as those of type $B(0,n)$ and the others is defined as:
$$x_{i}^{+}(z)=
\left\{\begin{array}{lll}
:\varepsilon_{n}(z)\delta^{*}_{1}(z):,&\quad\mbox{if}~ i=n;\\
:\delta_{i-n}(z)\delta^{*}_{i-n+1}(z):,&\quad \mbox{if}~n+1\leqslant
i\leqslant n+m-1;\\
:\delta_{m}(z)e(z):,&\quad\mbox{if}~i=n+m.\\
\end{array}
\right.$$
$$x_{i}^{-}(z)=
\left\{\begin{array}{lll}
:\delta_{1}(z)\varepsilon^{*}_{n}(z):,&\quad\mbox{if}~ i=n;\\
:\delta^{*}_{i-n}(z)\delta_{i-n+1}(z):,&\quad\mbox{if}~ n+1\leqslant
i\leqslant n+m-1;\\
:\delta^{*}_{m}(z)e(z):,&\quad\mbox{if}~i=n+m.\\
\end{array}
\right.$$
$$\alpha_{i}(z)=
\left\{\begin{array}{lll}
:\varepsilon_{n}(z)\varepsilon^{*}_{n}(z):-:\delta_{1}(z)\delta^{*}_{1}(z):,&\quad\mbox{if}~
i=n;\\
:\delta_{i-n}(z)\delta^{*}_{i-n}(z):-:\delta_{i-n+1}(z)\delta^{*}_{i-n+1}(z):,&\quad\mbox{if}~
n+1\leqslant i\leqslant n+m-1;\\
:\delta_{m}(z)\delta^{*}_{m}(z):,&\quad \mbox{if}~i=n.
\end{array}
\right.$$

(iv) For type $D(m,n)~(m\geqslant 1)$, the
$x_{i}^{\pm}(z),\alpha_{i}(z) ~(0\leqslant i\leqslant m+n-1)$ is
same as those of type $B(m,n)$ and the only difference is the last
one:
\begin{eqnarray*}
 &&x_{m+n}^{+}(z)=:\delta_{m-1}(z)\delta_{m}(z):,
 \quad x_{m+n}^{-}(z)=:\delta^{*}_{m-1}(z)\delta^{*}_{m}(z):\\
 &&\alpha_{m+n}(z)= :\delta_{m-1}(z)\delta^{*}_{m-1}(z)+:\delta_{m}(z)\delta^{*}_{m}(z)
\end{eqnarray*}

(v) For type $C(n)$, we define
$$x_{i}^{+}(z)=
\left\{\begin{array}{lll}
:\beta^{*}(z)\varepsilon^{*}_{1}(z):,&\quad\mbox{if}~ i=0;\\
:\varepsilon_{1}(z)\delta^{*}_{1}(z):,&\quad \mbox{if}~i=1\\
\sqrt{-1}:\delta_{i-1}(z)\delta^{*}_{i}(z):,&\quad \mbox{if}~2\leqslant i\leqslant n;\\
\frac{1}{2}:\delta_{n}(z)\delta_{n}(z):,&\quad \mbox{if}~i=n+1.
\end{array}
\right.$$
$$x_{i}^{-}(z)=
\left\{\begin{array}{lll}
:\beta(z)\varepsilon_{1}:,&\quad\mbox{if}~ i=0;\\
:\delta_{1}(z)\varepsilon_{1}^{*}(z):,&\quad \mbox{if}~i=1\\
\sqrt{-1}:\delta_{i}(z)\delta^{*}_{i-1}(z):,&\quad \mbox{if}~2\leqslant i\leqslant n;\\
\frac{1}{2}:\delta^{*}_{n}(z)\delta^{*}_{n}(z):,&\quad
\mbox{if}~i=n+1.
\end{array}
\right.$$
$$\alpha_{i}(z)=
\left\{\begin{array}{lll}
:\varepsilon(z_{1})\varepsilon_{1}^{*}(z):-:\beta(z)\beta^{*}(z):,&\quad \mbox{if}~i=0;\\
-:\varepsilon_{1}(z)\varepsilon^{*}_{1}(z):-:\delta_{1}(z)\delta^{*}_{1}(z):,&\quad\mbox{if}~
i=1;\\
:\delta_{i-1}(z)\delta^{*}_{i-1}(z):-:\delta_{i}(z)\delta^{*}_{i}(z):,&\quad\mbox{if}~
2\leqslant i\leqslant n;\\
2:\delta_{n}(z)\delta^{*}_{n}(z):,&\quad\mbox{if}~ i=n+1.
\end{array}
\right.$$
\begin{theorem} The field operators defined above give rise to a
representation of level $-1$ of $\mathfrak{T}(X)$ for type $A(m,n)$,
$B(m,n)$ and $D(m,n)$, of level $1$ for type
$C(n)$respectively.\end{theorem}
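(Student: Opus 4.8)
The plan is to check that the assigned field operators satisfy, as identities of operators on the Fock space $V$, the generating–series relations $1')$–$5')$ recorded in the Proposition just before this section, with $\mathcal{K}$ acting as a scalar; that scalar will be the asserted level. Relation $1')$ is immediate, since $\mathcal{K}$ is sent to a multiple of the identity and therefore commutes with every field. For the rest, the only inputs needed are the basic contractions $\langle a,b^{*}\rangle=(a,b)$, $\langle a^{*},b\rangle=(a,b)$ (up to the parity sign recorded in the definition of $\langle\cdot,\cdot\rangle$) together with the vanishing of $\langle a,b\rangle$ and $\langle a^{*},b^{*}\rangle$, and the commutator formulas for quadratic normally ordered products in Proposition \ref{P:comm1}. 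In each case one expands the bracket of two currents by the appropriate item $1)$–$6)$ of that Proposition and matches the resulting $\delta(z-w)$ and $\partial_{w}\delta(z-w)$ coefficients against the prescribed right–hand side; the prefactors $\sqrt{-1}$ and $\tfrac12$ in the definitions are exactly what normalize these coefficients.

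First I would treat the Cartan currents, relation $2')$. Writing each $\alpha_{i}(z)$ in its difference form $:a(z)a^{*}(z):-:b(z)b^{*}(z):$ with $a,b$ mutually orthogonal, the non-central $:rr:\delta(z-w)$ terms produced by Proposition \ref{P:comm1} cancel in pairs, so only the $\partial_{w}\delta(z-w)$ piece survives. The decisive point is that a bosonic pair $:aa^{*}:$ contributes $-\partial_{w}\delta(z-w)$ (item $1)$, with a plus sign between the two contraction products) while a fermionic pair contributes $+\partial_{w}\delta(z-w)$ (item $3)$, with a minus sign). In types $A,B,D$ the even directions $\varepsilon_{i},\overline{c}$ carry norm $+1$ and the odd directions $\delta_{i}$ norm $-1$, so each pair contributes $-(a,a)\partial_{w}\delta(z-w)$ uniformly, whence the self-bracket equals $-(\alpha_{i}|\alpha_{i})\partial_{w}\delta(z-w)$ and $\mathcal{K}=-1$; for $C(n)$ the roles are exchanged, the bosonic $\delta_{i}$ having norm $-1$ and the fermionic $\varepsilon_{1}$ norm $+1$, which flips the identification to $\mathcal{K}=+1$. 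The isotropic nodes, where $(\alpha_{i}|\alpha_{i})=0$, are a consistency check: the bosonic and fermionic pieces cancel and give central term $0$. Relation $3')$ is verified the same way using items $4),5),6)$: bracketing a Cartan current with $x_{j}^{\pm}(w)$ produces the single field $\pm(\alpha_{i}|\alpha_{j})x_{j}^{\pm}(w)\delta(z-w)$, the coefficient collecting correctly from the identities $(\beta|\beta)=\pm1$, $(\beta|\varepsilon_{i})=\delta_{1i}$ and $\alpha_{0}=-\beta+\delta_{n+1}$ (resp. $-2\beta$, $-\beta-\varepsilon_{1}$).

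Next I would verify relation $4')$. For $i\neq j$ the currents $x_{i}^{+}(z)$ and $x_{j}^{-}(w)$ involve disjoint index sets, so every contraction in Proposition \ref{P:comm1} vanishes and the bracket is $0$. For $i=j$ the bracket returns $\alpha_{i}(w)\delta(z-w)$ from the single-field terms plus a $\partial_{w}\delta(z-w)$ term; here the essential consistency is that the scalar multiplying $\partial_{w}\delta$ equals the very same $\mathcal{K}$ found in $2')$, and that the normalization $-\tfrac{2}{(\alpha_{i}|\alpha_{i})}$ (resp. the isotropic form for $(\alpha_{i}|\alpha_{i})=0$) reproduces the stated right–hand side. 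This uniform emergence of a single level across all node types — non-isotropic bosonic, non-isotropic fermionic, and odd isotropic — is the subtle part; it rests on the norms recorded in $(d_{0},\dots,d_{r})$ and, at the short ends of the $B$-type diagrams, on $\langle e,e\rangle=-2$ together with the identity $:e(z)e(z):=0$ (forced by the supercommutativity $:ab:=(-1)^{p(a)p(b)}:ba:$ at $a=b=e$), which kills the spurious $:ee:\delta$ term.

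Finally, relation $5')$. The first two identities $[x_{i}^{\pm}(z),x_{i}^{\pm}(w)]=0$ and, for $a_{ii}=a_{ij}=0$, $[x_{i}^{\pm}(z),x_{j}^{\pm}(w)]=0$ follow at once because all contractions in the relevant item of Proposition \ref{P:comm1} vanish, the currents being built from mutually orthogonal starred/unstarred vectors. The isotropic Serre relation (the double bracket for $a_{ii}=0,\,a_{ij}\neq0$) and the higher Serre relations for non-isotropic nodes are obtained by iterating Proposition \ref{P:comm1}: the inner bracket $[x_{i}^{\pm},x_{j}^{\pm}]$ yields a single normally ordered field, and bracketing it the prescribed $1-a_{ij}$ times produces contractions that telescope to zero, paralleling the finite-type Serre relations of Proposition \ref{P:KMalg} (note $a_{ii}=2$ whenever $a_{ii}\neq0$). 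This type-by-type bookkeeping of nested brackets, especially at the branch node of $D(m,n)$ and the short nodes carrying $e(z)$, is the most laborious step and is the main obstacle; but each case reduces to a finite Wick computation, so no new difficulty of principle arises.
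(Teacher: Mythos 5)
Your proposal is correct and follows essentially the same route as the paper: a case-by-case verification that the fields satisfy the generating-series relations $1')$--$5')$ via the quadratic-commutator formulas of Proposition \ref{P:comm1}, with the level read off from the coefficient of $\partial_{w}\delta(z-w)$ in the Cartan and $[x_i^{+},x_i^{-}]$ brackets (including the special treatment of the ghost field $e$ with $\langle e,e\rangle=-2$ and $:e(z)e(z):=0$). Your uniform sign bookkeeping $-(a,a)\partial_{w}\delta$ per boson/fermion pair is just a cleaner packaging of the computations the paper carries out node by node, and it correctly explains the flip from level $-1$ in types $A,B,D$ to level $+1$ in type $C$.
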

\begin{proof} To prove the theorem, we need to check the
field operators defined above satisfy all the  relations listed in
proposition 3.4 for each case.

i) Type $A(m,n)$
\begin{eqnarray*}
   [x_{0}^{+}(z),x_{0}^{-}(w)]&=&-[:\delta_{n+1}(z)\beta^{*}(z):,:\beta(z)\delta_{n+1}^{*}(z):] \\
   &=&-\big(:\delta_{n+1}(z)\delta^{*}_{n+1}(z):-:\beta(z)\beta^{*}(z):\big)\delta(z-w)
   +\partial_{w}\delta(z-w)\\
   &=&-\big(\alpha_{0}(z)\delta(z-w)
   +(-1)\partial_{w}\delta(z-w)\cdot\big)
\end{eqnarray*}
\begin{eqnarray*}
   [\alpha_{0}(z),x_{0}^{\pm}(w)]=0=\pm(\alpha_{0},\alpha_{0})x_{0}^{\pm}(w)\delta(z-w)
\end{eqnarray*}
For $1\leqslant i\leqslant m$, we have
\begin{eqnarray*}
   [x_{i}^{+}(z),x_{i}^{-}(w)]&=&[:\varepsilon_{i}(z)\varepsilon^{*}_{i+1}(z):,
   :\varepsilon_{i+1}(z)\varepsilon^{*}_{i}(z):] \\
   &=&-\big(:\varepsilon_{i}(z)\varepsilon^{*}_{i}(z):-:\varepsilon_{i+1}(z)
   \varepsilon^{*}_{i+1}(z):\big)\delta(z-w)
   +\partial_{w}\delta(z-w)\\
   &=&-\frac{2}{(\alpha_{i},\alpha_{i})}\big(\alpha_{i}(z)\delta(z-w)
   +(-1)\partial_{w}\delta(z-w)\cdot\big)
\end{eqnarray*}
\begin{eqnarray*}
   [\alpha_{i}(z),x_{i}^{+}(w)]&=&\sqrt{-1}[:\varepsilon_{i}(z)\varepsilon^{*}_{i}(z):
   -:\varepsilon_{i+1}(z)\varepsilon^{*}_{i+1}(z):,:\varepsilon_{i}(z)\varepsilon^{*}_{i+1}(z):] \\
   &=&2\sqrt{-1}:\varepsilon_{i}(z)\varepsilon^{*}_{i+1}(z):\delta(z-w)\\
   &=&(\alpha_{i},\alpha_{i})x_{i}^{+}(w)\delta(z-w)
  \end{eqnarray*}
$$[\alpha_{i}(z),x_{i}^{-}(w)]=-(\alpha_{i},\alpha_{i})x_{i}^{-}(w)\delta(z-w)$$
\begin{eqnarray*}
   [x_{m+1}^{+}(z),x_{m+1}^{-}(w)]&=&[:\varepsilon_{m+1}(z)\delta^{*}_{1}(z):,
   :\varepsilon^{*}_{m+1}(z)\delta_{1}(z):] \\
   &=&-\big(:\varepsilon_{m+1}(z)\varepsilon^{*}_{m+1}(z):-:\delta_{1}(z)
   \delta^{*}_{1}(z):\big)\delta(z-w)
   +\partial_{w}\delta(z-w)\\
   &=&-\big(\alpha_{m+1}(z)\delta(z-w)
   +(-1)\partial_{w}\delta(z-w)\cdot\big)
\end{eqnarray*}
\begin{eqnarray*}
   [\alpha_{m+1}(z),x_{m+1}^{\pm}(w)]=0=\pm(\alpha_{m+1},\alpha_{m+1})x_{m+1}^{\pm}(w)\delta(z-w)
\end{eqnarray*}
For $m+2\leqslant i\leqslant m+n+1$, we have
\begin{eqnarray*}
   [x_{i}^{+}(z),x_{i}^{-}(w)]&=&[:\delta_{i-m-1}(z)\delta^{*}_{i-m}(z):,
   :\delta_{i-m}(z)\delta^{*}_{i-m-1}(z):] \\
   &=&-\big(:\delta_{i-m-1}(z)\delta^{*}_{i-m-1}(z):
   -:\delta_{i-m}(z)\delta^{*}_{i-m}(z):\big)\delta(z-w)
   +\partial_{w}\delta(z-w)\\
   &=&-\big(\alpha_{i}(z)\delta(z-w)
   +(-1)\partial_{w}\delta(z-w)\cdot\big)
\end{eqnarray*}
\begin{eqnarray*}
   [\alpha_{i}(z),x_{i}^{+}(w)]&=&[:\delta_{i-m-1}(z)\delta^{*}_{i-m-1}(z):
   -:\delta_{i-m}(z)\delta^{*}_{i-m}(z):,
   :\delta_{i-m-1}(z)\delta^{*}_{i-m}(z):] \\
   &=&-2:\delta_{i-m-1}(z)\delta^{*}_{i-m}(z):\delta(z-w)=
   (\alpha_{i},\alpha_{i})x_{i}^{+}(w)\delta(z-w)
  \end{eqnarray*}
$$[\alpha_{i}(z),x_{i}^{-}(w)]=-(\alpha_{i},\alpha_{i})x_{i}^{-}(w)\delta(z-w)$$
It is straightforward to check for all $i\neq j$ that
$[x_{i}^{+}(z),x_{j}^{-}(w)]=0$ and
$$[\alpha_{i}(z),\alpha_{j}(w)]=(\alpha_{i},\alpha_{j})\partial_{w}\delta(z-w)\cdot(-1)$$
for example, we have
 \begin{eqnarray*}
 [\alpha_{i}(z),\alpha_{i+1}(w)] &=&-[:\varepsilon_{i+1}(z)\varepsilon^{*}_{i+1}(z):,
 :\varepsilon_{i+1}(w)\varepsilon^{*}_{i+1}(w):]  \\
  &=&\partial_{w}\delta(z-w)=(\alpha_{i},\alpha_{j})\partial_{w}\delta(z-w)\cdot(-1)
  \end{eqnarray*}
 We preceed to check the serre relations
 \begin{eqnarray*}
    && [x_{0}^{+}(z_{1}),[x_{0}^{+}(z_{2}),x_{1}^{+}(w)]]\\
    &=&-\sqrt{-1}[:\delta_{n+1}(z_{1})\beta^{*}(z_{1}):,[:\delta_{n+1}(z_{2})\beta^{*}(z_{2}):,
    :\varepsilon_{1}(w)\varepsilon^{*}_{2}(w):]]\\
    &=&-\sqrt{-1}[:\delta_{n+1}(z_{1})\beta^{*}(z_{1}):,:\delta_{n+1}(w)
    \varepsilon^{*}_{2}(w):]\delta(z_{2}-w)\\
    &=&0
 \end{eqnarray*}
 all the others relations are proved similarly.

\bigskip

ii) The construction for type $B(0,n)$ is almost the same to that
presented in \cite{JX} and the only difference is that we let $(e,e)=-2$
while it is 1 in \cite{JX}.

\bigskip

iii) Since the field operators for type $B(m,n)~(m\geqslant1)$ is
the same as type $B(0,n)$ for $0\leqslant i\leqslant n-1$, we only
to check field operators for $n\leqslant i\leqslant n+m$ satify the
relations. First, we have by Prososition \ref{P:comm1} 4) that
\begin{eqnarray*}
[x^{+}_{n}(z),x^{-}_{n}(w)]&=&[:\varepsilon_{n}(z)\delta^{*}_{1}(z):,:\varepsilon^{*}_{n}(w)\delta_{1}(w):]\\
&=&-(:\varepsilon_{n}(z)\varepsilon^{*}_{n}(z):-:\delta_{1}(z)\delta^{*}_{1}(z):)\delta(z-w)+\partial_{w}\delta(z-w)\\
&=&-(\alpha_{n}(z)\delta(z-w)+(-1)\cdot\partial_{w}\delta(z-w))
\end{eqnarray*}
and by Propostion  \ref{P:comm1} 5) and 6)
\begin{eqnarray*}
   [\alpha_{n}(z),x^{+}_{n}(w)]&=& [:\varepsilon_{n}(z)\varepsilon^{*}_{n}(z):
   -:\delta_{1}(z)\delta^{*}_{1}(z):,:\varepsilon_{n}(w)\delta^{*}_{1}(w):]\\
   &=&0=(\alpha_{n},\alpha_{n}),x^{+}_{n}(w)\delta(z-w)
   \end{eqnarray*}
     $$[\alpha_{n}(z),x^{-}_{n}(w)]
     =0=-(\alpha_{n},\alpha_{n})x^{-}_{n}(w)\delta(z-w)$$
For $n+1\leqslant i\leqslant m+n-1$, we have by proposition  4.3 3)
that\begin{eqnarray*}
[x^{+}_{i}(z),x^{-}_{i}(w)]&=&[:\delta_{i-n}(z)\delta^{*}_{i-n+1}(z):,:\delta^{*}_{i-n}(w)\delta_{i-n+1}(w):]\\
&=&\big(:\delta_{i-n}(z)\delta^{*}_{i-n}(z):-:\delta_{i-n+1}(z)\delta^{*}_{i-n+1}(z):\big)\delta(z-w)-\partial_{w}\delta(z-w)\\
&=&-\frac{2}{(\alpha_{i},\alpha_{i})}\big(\alpha_{i}(z)\delta(z-w)+(-1)\cdot\partial_{w}\delta(z-w)\big)
\end{eqnarray*}
\begin{eqnarray*}
   [\alpha_{i}(z),x^{+}_{i}(w)]&=& [:\delta_{i-n}(z)\delta^{*}_{i-n}(z):-:\delta_{i-n+1}(z)\delta^{*}_{i-n+1}(z):
   ,:\delta_{i-n}(z)\delta^{*}_{i-n+1}(z):]\\
   &=&-2:\delta_{i-n+1}(z)\delta^{*}_{i-n+1}(z):\delta(z-w)
=(\alpha_{i},\alpha_{i})x^{+}_{i}(w)\delta(z-w)
   \end{eqnarray*}
   $$[\alpha_{i}(z),x^{+}_{i}(w)]=-(\alpha_{i},\alpha_{i})x^{-}_{i}(w)\delta(z-w)\qquad\qquad$$
\begin{eqnarray*}
[x^{+}_{n+m}(z),x^{-}_{n+m}(w)]&=&[:\delta_{m}(z)e(z):,:\delta^{*}_{m}(w)e(w):]\\
&=&2:\delta_{m}(z)\delta^{*}_{m}(z):\delta(z-w)-2\partial_{w}\delta(z-w)\\
&=&-\frac{2}{(\alpha_{n+m},\alpha_{n+m})}\Big(\alpha_{n+m}(z)\delta(z-w)+(-1)\cdot\partial_{w}\delta(z-w)\Big)
\end{eqnarray*}
where  we have used the fact if $u(z)$ is an odd field, then
$:u(z)u(z):=0$. \begin{eqnarray*}
   [\alpha_{n+m}(z),x^{+}_{n+m}(w)]&=& [:\delta_{m}(z)\delta^{*}_{m}(z):
   ,:\delta_{m}(z)e(z):]\\
   &=&-:\delta_{m}(z)e(z):\delta(z-w)
=(\alpha_{n+m},\alpha_{n+m})x^{+}_{n+m}(w)\delta(z-w)
   \end{eqnarray*}
$$[\alpha_{n+m}(z),x^{-}_{n+m}(w)]=-(\alpha_{n+m},\alpha_{n+m})x^{-}_{n+m}(w)\delta(z-w)$$

One can check that $[x^{+}_{i}(z),x^{-}_{j}(w)]=0$ for $i\neq j$ one
by one using Proposition \ref{P:comm1}.

In what follows,  we check those relations concerning on
$\alpha_{i}(z)~(n\leqslant i\leqslant n+m)$. First, one has
\begin{eqnarray*}
  [\alpha_{n-1}(z),\alpha_{n}(w)]
  &=&-[:\varepsilon_{n}(z)\varepsilon^{*}_{n}(z):
  ,:\varepsilon_{n}(w)\varepsilon^{*}_{n}(w):] \\
   &=&\partial_{w}\delta(z-w)=(\alpha_{n-1},\alpha_{n})\partial_{w}\delta(z-w)\cdot(-1)
\end{eqnarray*}
Then we have, for $n\leqslant i\leqslant n+m-1$, that
\begin{eqnarray*}
  [\alpha_{i}(z),\alpha_{i+1}(w)]
  &=&-[:\delta_{i-n+1}(z)\delta^{*}_{i-n+1}(z):
  ,:\delta_{i-n+1}(w)\delta^{*}_{i-n+1}(w):] \\
   &=&-\partial_{w}\delta(z-w)=(\alpha_{i},\alpha_{i+1})\partial_{w}\delta(z-w)\cdot(-1)
\end{eqnarray*}

Next, we proceed to check the Serre type relations. First of all, we
have $[x_{i}^{\pm}(z),x_{i}^{\pm}(w)]=0$ for $0\leqslant i\leqslant
m+n$ and $[x_{n}^{\pm}(z),x_{i}^{\pm}(w)]=0$ for $0\leqslant
i\leqslant m+n,i\neq n-1,n+1$. Then we check that
\begin{eqnarray*}
  &&[x_{n}^{+}(z_{1}),[x_{n}^{+}(z_{1}),x_{n-1}^{+}(w)]]\\
  &&\qquad=-[:\varepsilon_{n}(z_{1})\delta^{*}_{1}(z_{1}):,:\delta^{*}_{1}(w)\varepsilon_{n}(w):]\delta(z_{2}-w)=0
\end{eqnarray*}
\begin{eqnarray*}
  &&[x_{n-1}^{+}(z_{1}),[x_{n-1}^{+}(z_{1}),x_{n}^{+}(w)]]\\
  &&\qquad=-[:\varepsilon_{n-1}(z_{1})\delta^{*}_{n}(z_{1}):,:\varepsilon_{n-1}(w)\delta^{*}_{1}(w):]\delta(z_{2}-w)=0
\end{eqnarray*}
similary, we have for $n\leqslant i\leqslant n+m-2$
$$[x_{i}^{\pm}(z_{1}),[x_{i}^{\pm}(z_{1}),x_{i+1}^{\pm}(w)]]
=[x_{i+1}^{\pm}(z_{1}),[x_{i+1}^{\pm}(z_{1}),x_{i}^{\pm}(w)]]=0$$
Finally, we check that
\begin{eqnarray*}
&& [x_{m+n}^{+}(z_{1}),[x_{m+n}^{+}(z_{2}),[x_{m+n}^{}(z_{3}),x_{m+n-1}^{+}(w)]]]\\
&=&-[:\delta_{m}(z_{1})e(z_{1}):,[:\delta_{m}(z_{2})e(z_{2}):,:e(w)\delta_{m+n-1}(w):]\delta(z_{3}-w)\\
&=&-2[:\delta_{m}(z_{1})e(z_{1}):,:\delta_{m}(w)\delta_{m-1}(w):]\delta(z_{2}-w)\delta(z_{3}-w)=0
\end{eqnarray*}
and $[x_{m+n-1}^{+}(z_{1}),[x_{m+n-1}^{+}(z_{2})x_{m+n}^{+}(w)]]=0$.

\bigskip

iv) For type $D(m,n)$, note that the difference betwwen $B(m,n)$,
then it is sufficient to  check that
\begin{eqnarray*}
[x^{+}_{n+m}(z),x^{-}_{n+m}(w)]&=&[:\delta_{m-1}(z)\delta_{m}(z):,:\delta^{*}_{m-1}(w)\delta^{*}_{m}(w):]\\
&=&\big(:\delta_{m-1}(z)\delta^{*}_{m-1}(z):+:\delta_{m}(z)\delta^{*}_{m}(z):\big)\delta(z-w)-\partial_{w}\delta(z-w)\\
&=&-\frac{2}{(\alpha_{n+m},\alpha_{n+m})}\Big(\alpha_{n+m}(z)\delta(z-w)+(-1)\cdot\partial_{w}\delta(z-w)\Big)
\end{eqnarray*}
where  we have used the fact if $u(z)$ is an odd field, then
$:u(z)u(z):=0$. \begin{eqnarray*}
   [\alpha_{n+m}(z),x^{+}_{n+m}(w)]&=& [:\delta_{m-1}(z)\delta^{*}_{m-1}(z):+:\delta_{m}(z)\delta^{*}_{m}(z):
   ,:\delta_{m-1}(z)\delta_{m}(z):]\\
   &=&-2:\delta_{m-1}(z)\delta_{m}(z):\delta(z-w)
=(\alpha_{n+m},\alpha_{n+m})x^{+}_{n+m}(w)\delta(z-w)
   \end{eqnarray*}
$$[\alpha_{n+m}(z),x^{-}_{n+m}(w)]=-(\alpha_{n+m},\alpha_{n+m})x^{-}_{n+m}(w)\delta(z-w)$$
$$ [x^{\pm}_{n+m}(z),x^{\mp}_{i}(w)]=0,~0\leqslant i\leqslant n+m-1  $$
\begin{eqnarray*}
  [\alpha_{n+m-2}(z),\alpha_{n+m}(w)]
  &=&-[:\delta_{m-1}(z)\delta^{*}_{m-1}(z):
  ,:\delta_{m-1}(w)\delta^{*}_{m-1}(w):] \\
   &=&\partial_{w}\delta(z-w)=(\alpha_{n+m-2},\alpha_{n+m})\partial_{w}\delta(z-w)\cdot(-1)
\end{eqnarray*}
$$[\alpha_{i}(z),\alpha_{n+m}(w)]=0,i\neq n+m-2$$
The Serre relations need checking here are those concerning
$x_{m+n}^{\pm}(z)$ and $x_{m+n-2}^{\pm}(z)$, for example
\begin{eqnarray*}
  &&[x_{m+n}^{+}(z_{1}),[x_{m+n}^{+}(z_{2}),x_{m+n-2}^{+}(w)]\\
  &&\qquad\qquad=-[:\delta_{m-1}(z_{1})\delta_{m}(z_{1}):,:\delta_{m}(w)\delta_{m-2}(w):]\delta(z_{2}-w)=0
\end{eqnarray*}
all others can be proved similarly.

\bigskip

v) For type $C(n)$, we first check relations 3') and 4') in
Prososition \ref{P:comm1}
\begin{eqnarray*}
[x_{0}^{+}(z),x_{0}^{-}(w)]&=&[:\beta^{*}(z)\varepsilon^{*}_{1}(z):,:\beta(z)\varepsilon_{1}(z):]\\
&=&-\Big(\big(:\varepsilon(z_{1})\varepsilon_{1}^{*}(z):-:\beta(z)\beta^{*}(z):\big)
\delta(z-w)+1\cdot\partial_{w}\delta(z-w)\Big)\\
&=&-(\alpha_{0}(z)\delta(z-w)+1\cdot\partial_{w}\delta(z-w))
\end{eqnarray*}
\begin{eqnarray*}
[\alpha_{0}(z),x_{0}^{\pm}(w)]=0=\pm(\alpha_{0},\alpha_{0})x_{0}^{\pm}(w)\delta(z-w)
\end{eqnarray*}

\begin{eqnarray*}
[x_{1}^{+}(z),x_{1}^{-}(w)]&=&[:\varepsilon_{1}(z)\delta^{*}_{1}(z):,:\varepsilon_{1}^{*}(z)\delta_{1}(z):]\\
&=&-\Big(\big(:\varepsilon_{1}(z)\varepsilon^{*}_{1}(z):-:\delta_{1}(z)\delta^{*}_{1}(z):\big)
\delta(z-w)+1\cdot\partial_{w}\delta(z-w)\Big)\\
&=&-(\alpha_{1}(z)\delta(z-w)+1\cdot\partial_{w}\delta(z-w))
\end{eqnarray*}
\begin{eqnarray*}
[\alpha_{1}(z),x_{1}^{\pm}(w)]=0=\pm(\alpha_{1},\alpha_{1})x_{1}^{\pm}(w)\delta(z-w)
\end{eqnarray*}
\begin{eqnarray*}
[x_{i}^{+}(z),x_{i}^{-}(w)]&=&-[:\delta_{i-1}(z)\delta^{*}_{i}(z):,:\delta_{i}(z)\delta^{*}_{i-1}(z):]\\
&=&\big(:\delta_{i-1}(z)\delta^{*}_{i-1}(z):-:\delta_{i}(z)\delta^{*}_{i}(z):\big)
\delta(z-w)+1\cdot\partial_{w}\delta(z-w)\big)\\
&=&-\frac{2}{(\alpha_{i},\alpha_{i})}\big(\alpha_{i}(z)\delta(z-w)+1\cdot\partial_{w}\delta(z-w)\big),
\quad 2\leqslant i\leqslant n.
\end{eqnarray*}
\begin{eqnarray*}
[\alpha_{i}(z),x_{i}^{+}(w)]&=&\sqrt{-1}[:\delta_{i-1}(z)\delta^{*}_{i-1}(z):
-:\delta_{i}(z)\delta^{*}_{i}(z):,:\delta_{i-1}(z)\delta^{*}_{i}(z):]\\
&=&-2\sqrt{-1}:\delta_{i-1}(z)\delta^{*}_{i}(z):\delta(z-w)\\
&=&(\alpha_{i},\alpha_{i})x_{i}^{+}(w)\delta(z-w)
\end{eqnarray*}
$$[\alpha_{i}(z),x_{i}^{-}(w)]=-(\alpha_{i},\alpha_{i})x_{i}^{-}(w)\delta(z-w)$$

\begin{eqnarray*}
[x_{n+1}^{+}(z),x_{n+1}^{-}(w)]&=&\frac{1}{4}[:\delta_{n}(z)\delta_{n}(z):,
:\delta^{*}_{n}(z)\delta^{*}_{n}(z):]\\
&=&\frac{2}{4}\big(2:\delta_{n}(z)\delta^{*}_{n}(z):
\delta(z-w)+1\cdot\partial_{w}\delta(z-w)\big)\\
&=&-\frac{2}{(\alpha_{n+1},\alpha_{n+1})}\big(\alpha_{n+1}(z)\delta(z-w)+1\cdot\partial_{w}\delta(z-w)\big)
\end{eqnarray*}
\begin{eqnarray*}
[\alpha_{n+1}(z),x_{n+1}^{+}(w)]&=&[:\delta_{n}(z)\delta^{*}_{n}(z):
,:\delta_{n}(z)\delta_{n}(z):]\\
&=&-2:\delta_{i}(z)\delta^{*}_{i+1}(z):\delta(z-w)
=(\alpha_{n+1},\alpha_{n+1})x_{n+1}^{+}(w)\delta(z-w)
\end{eqnarray*}
$$[\alpha_{n+1}(z),x_{n+1}^{-}(w)]=-(\alpha_{n+1},\alpha_{n+1})x_{n+1}^{-}(w)\delta(z-w)$$
$$[x_{i}^{+}(z),x_{j}^{-}(w)]=0,\quad\mbox{for}~i\neq j$$

Next, we will check the relation 3') by straightforward calculus
$$[\alpha_{0}(z),\alpha_{0}(w)]=[\alpha_{1}(z),\alpha_{1}(w)]=0$$
\begin{eqnarray*}
[\alpha_{i}(z),\alpha_{i}(w)&=&[:\delta_{i-1}(z)\delta^{*}_{i-1}(z):,:\delta_{i-1}(w)\delta^{*}_{i-1}(w):]\\
&&+[:\delta_{i-1}(z)\delta^{*}_{i-1}(z):,:\delta_{i-1}(w)\delta^{*}_{i-1}(w):],\qquad 2\leqslant i\leqslant n\\
&=&-2\partial_{w}\delta(z-w)=(\alpha_{i},\alpha_{i})\partial_{w}\delta(z-w)
\end{eqnarray*}
\begin{eqnarray*}
[\alpha_{n+1}(z),\alpha_{n+1}(w)&=&4[:\delta_{n}(z)\delta^{*}_{n}(z):,:\delta_{n}(w)\delta^{*}_{n}(w):]\\
&=&-4\partial_{w}\delta(z-w)=(\alpha_{n+1},\alpha_{n+1})\partial_{w}\delta(z-w)
\end{eqnarray*}
all others are proved similarly.

Finally, we proceed to check the serre type relations as follows:
$$[x_{i}^{\pm}(z),x_{i}^{\pm}(w)]=0,\quad 0\leqslant i\leqslant n+1$$
$$[x_{0}^{\pm}(z),x_{i}^{\pm}(w)]=[x_{1}^{\pm}(z),x_{i}^{\pm}(w)]=0,\quad 3\leqslant i\leqslant n+1$$
\begin{eqnarray*}
  &&[x_{0}^{+}(z_{1}),[x_{0}^{+}(z_{2}),x_{1}^{+}(z_{w})]]=
 [:\beta^{*}(z_{1})\varepsilon^{*}_{1}(z_{1}):,:\beta^{*}(w)\varepsilon^{*}_{1}(w):]\delta(z_{2}-w)=0
\end{eqnarray*}
\begin{eqnarray*}
  &&[x_{n}^{+}(z_{1}),[x_{n}^{+}(z_{2}),[x_{n}^{+}(z_{3}),x_{n+1}^{+}(z_{w})]]\\
  =&&\sqrt{-1}[:\delta_{n-1}(z_{1})\delta^{*}_{n}(z_{1}):,
  [:\delta_{n-1}(z_{2})\delta^{*}_{n}(z_{2}):,:\delta_{n-1}(w)\delta_{n}(w):]\delta(z_{3}-w)\\
  =&&-\sqrt{-1}[:\delta_{n-1}(z_{1})\delta^{*}_{n}(z_{1}):,:\delta_{n-1}(w)\delta_{n-1}(w):]\delta(z_{2}-w)\delta(z_{3}-w)\\
  =&&0
\end{eqnarray*}
all others are proved similarly and this completes the proof.
\end{proof}

\section{Appendix}
In this appendix, we list the extended distinguished Cartan matrix
of affine superalgebra  of type $A,B,C, D$ for convenience.

1) Type $A(m,n)$
$$
\begin{pmatrix}
 2 & -1& 0 &\cdots  &0&0&0&\cdots& &1 \\
-1 & 2 & -1& \ddots &0&0&0&\cdots& &0 \\
 0 &-1 & 2 & \ddots &0& &0&\cdots& &0 \\
\vdots&\ddots&\ddots&\ddots&-1&\ddots&&&&\vdots\\
0&&0&-1&2&-1&0&&&0\\
0 &\cdots & 0 & 0 & -1 & 0 &1&\cdots&&0 \\
0 &\cdots & 0 & 0 & 0 & -1 &2&\cdots&&0 \\
\vdots&\ddots&\ddots&\ddots&\ddots&\ddots&\ddots&&&\vdots\\
0&  & &  &&&& 2&-1 & 0 \\
0 &  &&  &&&& -1 &2 & -1 \\
-1 & \cdots &&\cdots&&\cdots&&0&-1 & 2
\end{pmatrix}$$

2) type $B(0,n)^{(1)}~(n\geqslant1)$
\begin{equation*}
\begin{pmatrix} 2 & -1 & 0 &\cdots & 0& 0 & 0 \\
-2 & 2 & -1& \cdots & 0& 0 & 0 \\
0 & -1 & 2& \cdots & 0& 0 & 0 \\
\cdot & \cdot & \cdot & \cdots & \cdot & \cdot &\cdot \\
0 & 0 & 0& \cdots & 2&-1 & 0 \\
0 & 0 & 0& \cdots & -1 &2 & -1 \\
0 & 0 & 0& \cdots &0&-2 & 2
\end{pmatrix},
\end{equation*}
3) type $B(m,n)^{(1)}~(m\geqslant1,n\geqslant1)$
$$
A=\begin{pmatrix}
 2 & -1& 0 &\cdots  &0&0&0&\cdots& &0 \\
-2 & 2 & -1& \ddots &0&0&0&\cdots& &0 \\
 0 &-1 & 2 & \ddots &0& &0&\cdots& &0 \\
\vdots&\ddots&\ddots&\ddots&-1&\ddots&&&&\vdots\\
0&&0&-1&2&-1&0&&&0\\
0 &\cdots & 0 & 0 & -1 & 0 &1&\cdots&&0 \\
0 &\cdots & 0 & 0 & 0 & -1 &2&\cdots&&0 \\
\vdots&\ddots&\ddots&\ddots&\ddots&\ddots&\ddots&&&\vdots\\
0&  & &  &&&& 2&-1 & 0 \\
0 &  &&  &&&& -1 &2 & -1 \\
0 & \cdots &&\cdots&&\cdots&&0&-2 & 2
\end{pmatrix}$$

4) type $C(n)^{(1)}~(n\geqslant1)$
$$
\begin{pmatrix}

 0  & -2 &-1&\cdots&&&0 \\
 -2 & 0 &-1&\cdots&&&0 \\
 -1 & -1&2&\cdots&&&0 \\
&\ddots&\ddots&\ddots&&&&\vdots\\
0  &&&& 2&-1 & 0 \\
  &&&& -1 &2 & -2 \\
0&\cdots&&&0&-1 & 2
\end{pmatrix}$$

5) type $D(m,n)^{(1)}~(n\geqslant1)$
$$
\begin{pmatrix}
 2 & -1& 0 &\cdots  &0&0&0&\cdots& &0 \\
-2 & 2 & -1& \ddots &0&0&0&\cdots& &0 \\
 0 &-1 & 2 & \ddots &0& &0&\cdots& &0 \\
\vdots&\ddots&\ddots&\ddots&-1&\ddots&&&&\vdots\\
0&&0&-1&2&-1&0&&&0\\
0 &\cdots & 0 & 0 & -1 & 0 &1&\cdots&&0 \\
0 &\cdots & 0 & 0 & 0 & -1 &2&\cdots&&0 \\
\vdots&\ddots&\ddots&\ddots&\ddots&\ddots&\ddots&&&\vdots\\
0&  & &  &&&& 2&-1 &  -1 \\
0 &  &&  &&&& -1 &2 & 0\\
0 & &&\cdots&&\cdots&& -1&0 & 2
\end{pmatrix},
$$

\end{document}